\documentclass[11pt]{article}

\usepackage[a4paper,margin=1in]{geometry}
\usepackage[T1]{fontenc}
\usepackage[utf8]{inputenc}
\usepackage{lmodern}
\usepackage{microtype}
\usepackage{amsmath,amssymb,amsthm,mathtools}
\usepackage{enumitem}
\usepackage{xcolor}
\usepackage{hyperref}
\usepackage[nameinlink,capitalise]{cleveref}
\usepackage[mathscr]{eucal}
\usepackage{graphicx}
\usepackage{tikz}
\usepackage{pgfplots}
\pgfplotsset{compat=1.18}
\usepackage{float}

\definecolor{Acol}{RGB}{184,45,45}
\definecolor{Bcol}{RGB}{225,132,25}
\definecolor{Ccol}{RGB}{35,112,177}
\definecolor{Dcol}{RGB}{35,145,88}

\hypersetup{
  colorlinks=true,
  linkcolor=blue,
  citecolor=blue,
  urlcolor=blue
}

\theoremstyle{plain}
\newtheorem{theorem}{Theorem}[section]
\newtheorem{proposition}[theorem]{Proposition}
\newtheorem{lemma}[theorem]{Lemma}
\newtheorem{corollary}[theorem]{Corollary}

\newtheorem{question}[theorem]{Question}

\theoremstyle{definition}
\newtheorem{definition}[theorem]{Definition}

\theoremstyle{remark}
\newtheorem{remark}[theorem]{Remark}

\newcommand{\RR}{\mathbb R}
\newcommand{\NN}{\mathbb N}

\newcommand{\DM}{\operatorname{DM}}

\newcommand{\reg}{\mathrm{reg}}

\newcommand{\sss}[1]{{\scriptscriptstyle #1}}

\newcommand{\set}[2]{\ensuremath{\left\{ #1\,:\; #2\right\}}}

\newcommand{\Ocl}[1]{%
  \overline{#1}\,^{\raisebox{0.3ex}{$\scriptstyle O$}}%
}

\begin{document}

\title{Regular Sublattices, \(O\)-Closed Ideals and\\
Dedekind--MacNeille Completions}

\author{Kevin Abela\\
\small		Department of Mathematics \\
\small	Faculty of Science \\
\small		University of Malta \\
\small	Msida MSD 2080, Malta\\
\small \texttt{kevin.abela.11@um.edu.mt}
\and
Emmanuel Chetcuti\\
	\small Department of Mathematics\\
	\small	Faculty of Science\\
	\small	University of Malta\\
	\small	Msida MSD 2080  Malta\\
\small \texttt{emanuel.chetcuti@um.edu.mt}}

	\date{\today}
	\maketitle

\begin{abstract}
We study the behaviour of regular sublattices of infinitely distributive
lattices under completion.  Using order convergence, we associate with an
infinitely distributive lattice \(L\) the complete lattice \(\mathfrak I_L\) of
\(O\)-closed order ideals and describe the corresponding closure operator
\(A\mapsto A^{\sigma_L}\).  We show that every lattice homomorphism induces a canonical map between the corresponding lattices of \(O\)-closed ideals, and compare \(\mathfrak I_L\) with the Dedekind--MacNeille completion \(\DM(L)\).  When \(\DM(L)\) remains infinitely distributive, this
comparison yields extension results for lattice homomorphisms and a
join-regular realization of \(\DM(Y)\) inside an ambient complete lattice.

We also determine a sharp finite-dimensional obstruction.  We construct an
infinitely distributive regular sublattice \(L\subseteq\mathbb R^3\) whose
Dedekind--MacNeille completion is not even modular, and therefore cannot be
realized as a sublattice of \(\mathbb R^3\).  In contrast, we prove that the
Dedekind--MacNeille completion of every bounded sublattice of a product of two
chains is distributive.  Thus dimension three is the least dimension in which
a bounded sublattice of a finite product of real chains can have a
non-distributive Dedekind--MacNeille completion.
\end{abstract}

\noindent\textbf{2020 Mathematics Subject Classification.}
Primary 06B23; Secondary 06B10, 06D05.

\section{Introduction}

In the theory of Archimedean vector lattices, Dedekind completion interacts
particularly well with the algebraic and order structure.  If \(Y\) is a
vector sublattice of an Archimedean vector lattice \(X\), then the Dedekind
completion of \(Y\) can be realized as a vector sublattice of the Dedekind
completion of \(X\).  If, in addition, \(Y\) is regular in \(X\), the induced
embedding of the completions is regular.  This suggests a natural question:
which parts of this permanence phenomenon depend essentially on the linear
structure, and which are consequences of the underlying lattice order?  For
background on Dedekind completion and regular sublattices of vector lattices
see \cite{LuxemburgZaanen,AliprantisBurkinshaw}; for their role in order and
unbounded-order convergence see
\cite{GaoTroitskyXanthos,AbelaChetcutiPositivity}.

For general lattices the corresponding completion is the
Dedekind--MacNeille completion, originating in MacNeille's cut construction
\cite{MacNeille1937}; standard accounts may be found in
\cite{Birkhoff,DaveyPriestley,Gratzer}.  Here a fundamental obstruction
appears: Dedekind--MacNeille completion need not preserve distributivity.
Funayama gave the first counterexample \cite{Funayama1944}, and the
phenomenon was developed systematically by Dilworth and McLaughlin
\cite{DilworthMcLaughlin}.  Criteria for distributivity of the normal
completion and related distributive extensions were subsequently studied by
Erné \cite{Erne1982,Erne1991} and by Ball and Pultr
\cite{BallPultrNormalCompletion}.  More recent work compares the
Dedekind--MacNeille completion with other completion procedures, including
ideal, Bruns--Lakser and canonical completions
\cite{BezhanishviliEtAl2025}.

The relationship between order convergence and Dedekind-MacNeille completion is already visible in a classical fact: order convergence on a lattice $L$ agrees with the restriction of order convergence from its Dedekind--MacNeille completion $\DM(L)$; see \cite{AvilesBilokopytovTroitsky} and the references therein. Thus, from the point of view of order convergence, the passage from $L$ to $\DM(L)$ is particularly natural.

Our starting point is order convergence rather than cuts.  For an infinitely
distributive lattice \(L\), we consider the complete lattice
\(\mathfrak I_L\) of \(O\)-closed order ideals and the associated closure
\(A\mapsto A^{\sigma_L}\).  We obtain an explicit description of this closure, show that every lattice homomorphism induces a canonical map between the corresponding lattices of \(O\)-closed ideals, and analyse the behaviour of the closure under regular inclusions.  This provides a completion-like object
which is intrinsic to the order-convergence structure of \(L\).

We then compare \(\mathfrak I_L\) with the Dedekind--MacNeille completion.
When \(\DM(L)\) is infinitely distributive, the two constructions coincide.
This identification gives extension results for lattice homomorphisms and,
for a regular sublattice \(Y\) of an infinitely distributive complete lattice,
a join-regular realization of \(\DM(Y)\) whenever \(\DM(Y)\) is itself
infinitely distributive.  Thus the theory recovers, at the lattice level and
under an explicit completion hypothesis, part of the permanence behaviour
familiar from vector lattices.

The necessity of controlling the completed lattice is illustrated sharply in
Section~\ref{sec:counterexample}.  We construct a bounded infinitely
distributive regular sublattice
\(
        L\subseteq\mathbb R^3
\)
for the coordinatewise order such that \(\DM(L)\) is not even modular.  In
particular, although \(L\) sits regularly inside the Dedekind complete vector
lattice \(\mathbb R^3\), its Dedekind--MacNeille completion cannot be realized
as a lattice substructure of that ambient space.  The example is
dimensionally sharp.  We prove that if \(Y\) is any bounded sublattice of a
product \(C_1\times C_2\) of two chains, then \(\DM(Y)\) is distributive.
Consequently, among bounded sublattices of finite products of real chains,
three is the least dimension in which a non-distributive
Dedekind--MacNeille completion can occur.  This contrast between
\(\mathbb R^2\) and \(\mathbb R^3\) isolates a genuinely order-theoretic
threshold and motivates the further problems posed at the end of the
section.

\section{Preliminaries on regularity and infinite distributivity}
\label{sec:prelim}

\begin{definition}[Regular sublattice]
Let \(Y\) be a sublattice of a lattice \(L\). We say that \(Y\) is
\emph{regular} in \(L\), and write \(Y\leq_{\reg}L\), if every supremum or
infimum of a subset of \(Y\) which exists in \(Y\) also exists in \(L\), with
the same value.
\end{definition}

\begin{definition}[Infinite distributivity]
A lattice \(L\) satisfies the \emph{Join-Infinite Distributive Law} (JID) if,
whenever \(\bigvee A\) exists,
\[
 x\wedge\bigvee A=\bigvee_{a\in A}(x\wedge a)
\]
for every \(x\in L\). Dually, \(L\) satisfies the \emph{Meet-Infinite
Distributive Law} (MID) if, whenever \(\bigwedge A\) exists,
\[
 x\vee\bigwedge A=\bigwedge_{a\in A}(x\vee a)
\]
for every \(x\in L\). We call \(L\) \emph{infinitely distributive} if it
satisfies both JID and MID.
\end{definition}

\begin{remark}
Every lattice-ordered group and every vector lattice is infinitely distributive.
This is one reason why regular embeddings of abstract infinitely distributive
lattices into vector lattices are natural; compare
\cite[Chapter~1]{LuxemburgZaanen} and \cite[Chapter~1]{AliprantisBurkinshaw}.
\end{remark}

\begin{lemma}[Transitivity of regularity]
Let \(Y\subseteq Z\subseteq L\) be sublattices. If \(Y\leq_{\reg}Z\) and
\(Z\leq_{\reg}L\), then \(Y\leq_{\reg}L\).
\end{lemma}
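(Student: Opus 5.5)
The argument is a direct two-step application of the definition of regularity. By order duality we only need to treat suprema; infima are handled the same way with $\bigvee$ replaced by $\bigwedge$ throughout.

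Let $A\subseteq Y$ and suppose $s=\bigvee A$ exists in $Y$, meaning $s\in Y$ is the least upper bound of $A$ among elements of $Y$. We must show that $\bigvee A$ exists in $L$ and equals $s$.

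\emph{First step: pass to $Z$.} Since $Y\leq_{\reg}Z$ and $A$ is a subset of $Y$ whose supremum exists in $Y$, the definition gives that $\bigvee A$ exists in $Z$ and equals $s$. So $s\in Y\subseteq Z$ is the least upper bound of $A$ computed in $Z$.

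\emph{Second step: pass to $L$.} Now view $A$ as a subset of $Z$, which is legitimate because $Y\subseteq Z$. Its supremum exists in $Z$, with value $s$. Since $Z\leq_{\reg}L$, the definition gives that $\bigvee A$ exists in $L$ and equals $s$. This is exactly what regularity of $Y$ in $L$ requires. We also note that $Y$ is a sublattice of $L$, because finite joins and meets in $Y$ agree with those in $Z$, and hence with those in $L$.

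There is no real obstacle here. The one point to be careful about in the second step is that the hypothesis $Z\leq_{\reg}L$ applies to arbitrary subsets of $Z$, not only to subsets that are "native" to $Z$. Once that is observed, $A$ qualifies. It is also important that the value $s$ is carried unchanged through both steps, so that the supremum in $L$ coincides with the original supremum in $Y$ and not merely with some element of $Z$.
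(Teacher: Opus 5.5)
Your proof is correct and is exactly the paper's argument: you apply the definition of regularity twice to obtain $\bigvee_Y A=\bigvee_Z A=\bigvee_L A$, and infima are handled dually. Your closing remark that $Y$ is a sublattice of $L$ is harmless, but it is already part of the hypotheses.
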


\begin{proof}
If \(A\subseteq Y\) has a supremum in \(Y\), regularity gives
\[
 \bigvee_Y A=\bigvee_Z A=\bigvee_L A.
\]
The argument for infima is dual.
\end{proof}

\section{MacNeille completions and \(O\)-closed ideals}
\label{sec:macneille}

\subsection{Order convergence and O-closed ideals}

Let \(L\) be a lattice. Recall that a net
\((x_\gamma)_{\gamma\in\Gamma}\) is said to \emph{order converge} to
\(x\in L\), written
\[
x_\gamma\xrightarrow{O}x,
\]
if there exist a directed set \(M\subseteq L\) and a filtered set
\(N\subseteq L\) satisfying
\[
\bigvee M=x=\bigwedge N,
\]
such that for every pair \((m,n)\in M\times N\), the net
\((x_\gamma)\) is eventually contained in the interval \([m,n]\).

Following \cite{AbelaChetcutiPositivity}, a net
\((x_\gamma)_{\gamma\in\Gamma}\) is said to \emph{unbounded order
converge} (\emph{\(\mathfrak u O\)-converge}) to \(x\in L\), written
\[
x_\gamma\xrightarrow{\mathfrak u O}x,
\]
if
\[
(x_\gamma\wedge t)\vee s
\xrightarrow{O}
(x\wedge t)\vee s
\]
for every pair \(s,t\in L\) satisfying \(s\leq t\).

The importance of infinite distributivity stems from the following fact.

\begin{theorem}[{\cite[Theorem 4]{AbelaChetcutiPositivity}}]
Let \(L\) be a distributive lattice. Then \(L\) is infinitely distributive if
and only if \(\mathfrak u O\)-convergence is order continuous, i.e.
\[
x_\gamma\xrightarrow{O}x
\quad\Longrightarrow\quad
x_\gamma\xrightarrow{\mathfrak u O}x.
\]
\end{theorem}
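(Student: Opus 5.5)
The proof splits into two implications, and both rest on the interval description of order convergence given above.

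\emph{Infinite distributivity implies order continuity.} Let \(x_\gamma\xrightarrow{O}x\), witnessed by a directed set \(M\) and a filtered set \(N\) with \(\bigvee M=x=\bigwedge N\). Fix \(s\le t\) and write \(\phi(y)=(y\wedge t)\vee s\). The map \(\phi\) is monotone, so \(\phi(M)\) is directed and \(\phi(N)\) is filtered. Whenever \(x_\gamma\in[m,n]\) we get \(\phi(x_\gamma)\in[\phi(m),\phi(n)]\). It therefore remains to show
\[
\bigvee\phi(M)=\phi(x)=\bigwedge\phi(N).
\]
By JID, \(\bigvee_{m}(m\wedge t)=x\wedge t\). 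Joining with \(s\) preserves existing joins of nonempty families, since \(\bigvee_m(a_m\vee s)=(\bigvee_m a_m)\vee s\) holds in any lattice. This gives \(\bigvee\phi(M)=\phi(x)\). For the other side, distributivity lets us rewrite \(\phi(y)=(y\vee s)\wedge t\). Then MID gives \(\bigwedge_n(n\vee s)=x\vee s\), and meeting with \(t\) preserves existing meets. Hence \(\bigwedge\phi(N)=\phi(x)\), which yields \(x_\gamma\xrightarrow{\mathfrak uO}x\).

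\emph{Order continuity implies infinite distributivity.} Suppose \(\bigvee A=a\) exists and \(x\in L\) is given. We must show \(\bigvee_{b\in A}(x\wedge b)=x\wedge a\). Let \(D\) be the set of finite joins of elements of \(A\). Then \(D\) is directed, \(\bigvee D=a\), and by finite distributivity the sets \(\{x\wedge d\}_{d\in D}\) and \(\{x\wedge b\}_{b\in A}\) have the same upper bounds. It therefore suffices to treat a directed \(D\).

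The net \((d)_{d\in D}\) order converges to \(a\), witnessed by \(M=D\) and \(N=\{a\}\): for \(m\in D\), eventually \(d\ge m\) and \(d\le a\). By hypothesis the net \(\mathfrak uO\)-converges. Take \(s=x\wedge a\le t=x\); then \(\phi(d)=(d\wedge x)\vee(x\wedge a)=x\wedge a\) is constant, which is useless. Instead take \(s\) to be the bottom element of the interval we care about, namely \(t=x\) and \(s=x\wedge d_0\) for a fixed \(d_0\in D\), restricting the net to \(d\ge d_0\). Then \(\phi(d)=x\wedge d\) by distributivity, and \(\phi(a)=x\wedge a\). So \(x\wedge d\xrightarrow{O}x\wedge a\), witnessed by some \(M',N'\). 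Since the net is increasing, each \(m'\in M'\) is eventually below \(x\wedge d\), and hence every upper bound \(u\) of \(\{x\wedge d\}\) bounds \(M'\). This gives \(u\ge\bigvee M'=x\wedge a\), while \(x\wedge a\) is clearly an upper bound. So the join is \(x\wedge a\), and JID holds. MID follows by the dual argument applied to a filtered set \(\{a\}\)-witnessed from above, using \(t=x\vee d_0\) and \(s=x\).

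The main obstacle is this second direction. The difficulty is choosing \(s,t\) so that \(\phi\) reduces to \(d\mapsto x\wedge d\) rather than collapsing to a constant, together with the observation that for a monotone net the lower witness \(M'\) of order convergence forces the limit to be its supremum. I also need to check that passing to the tail \(d\ge d_0\) does not change order convergence of \((d)\) to \(a\).
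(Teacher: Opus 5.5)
Your proposal is correct. The paper itself gives no proof of this statement; it imports it from \cite[Theorem~4]{AbelaChetcutiPositivity}, so there is no in-paper argument to compare with, and what you wrote is a valid self-contained proof of both implications.

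\emph{Sufficiency.} Transporting the witnesses $M,N$ through the monotone map $\phi(y)=(y\wedge t)\vee s$ works. On the join side you use JID together with the fact that joining with $s$ preserves nonempty joins. On the meet side you rewrite $\phi(y)=(y\vee s)\wedge t$ and use MID. Together these give $\bigvee\phi(M)=\phi(x)=\bigwedge\phi(N)$, as required.

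\emph{Necessity.} Reducing to the directed set $D$ of finite joins is sound. Choosing $t=x$, $s=x\wedge d_0$ correctly avoids needing a least element in $L$.

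A few points to tidy up in a final write-up:
\begin{enumerate}[label={\rm(\alph*)}]
\item Delete the exploratory sentence about $s=x\wedge a$.
\item Treat $A=\varnothing$ separately. In that case $D$ is empty, hence not directed, but $\bigvee A$ is the least element and JID holds trivially.
\item For $d\ge d_0$, the identity $\phi(d)=x\wedge d$ is plain absorption, since $x\wedge d_0\le x\wedge d$; no distributivity is needed. Likewise, in the dual argument $f\wedge(x\vee f_0)=f$ for $f\le f_0$.
\item The monotonicity of the net plays no role in the last step. For each $m'\in M'$ you only need some single $d\ge d_0$ with $m'\le x\wedge d$. The clause ``eventually in $[m',n']$'' provides this, using $N'\neq\varnothing$ and the directedness of $D$.
\item The worry about passing to the tail is harmless. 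The set $\{d\in D: d\ge d_0\}$ is cofinal in $D$, so the tail net still order converges to $a$ with the same witnesses $M=D$, $N=\{a\}$. You can also skip the restriction entirely, since the eventual-containment clause already lets you pick $d\ge d_0$.
\end{enumerate}
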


For a subset \(A\subseteq L\), define
\[
A^O_1
=
\{x\in L:
\exists (a_\gamma)\subseteq A
\text{ with }
a_\gamma\xrightarrow{O}x\}.
\]
This set is called the \emph{first \(O\)-adherence} of \(A\). We say that
\(A\) is \emph{\(O\)-closed} if
\[
A=A^O_1.
\]
Let $\Ocl{A}$ be the smallest $O$-closed subset of $L$ that contains $A$.

Similarly, define
\[
A^{\mathfrak u O}_1
=
\{x\in L:
\exists (a_\gamma)\subseteq A
\text{ with }
a_\gamma\xrightarrow{\mathfrak u O}x\}.
\]
This is called the \emph{first \(\mathfrak u O\)-adherence} of \(A\). We say
that \(A\) is \emph{\(\mathfrak u O\)-closed} if
\[
A=A^{\mathfrak u O}_1.
\]

We denote by \([A]\) the
sublattice generated by \(A\), and by \(\mathscr H_L[A]\) the ideal
generated by \(A\). We write
\(A^{\sigma_L}\) for the smallest \(O\)-closed order ideal of \(L\) containing
\(A\), i.e. $A^{\sigma_L}=\Ocl{\mathscr H_L[A]}$.  Let$\mathfrak I_L$
denote the collection of all \(O\)-closed order ideals of \(L\).

\begin{theorem}[{\cite[Proposition 15, Theorem 16]{AbelaChetcutiPositivity}}]\label{ThmIdealOclosure}
Let $A$ be a downset  of a lattice $ L$. 
\begin{enumerate}[label={\rm(\roman*)}]
\item Every $x \in  A_{1}^{O}$ is the supremum of an increasing net in $ A$.  Moreover, if $L$ is infinitely distributive, $A_{1}^{O}$ is a downset.
\item If $L$ is infinitely distributive and $A$ is an ideal, then $ A_{1}^{O}= A_{1}^{\mathfrak{u}O}$ and both are  $\mathfrak{u}$O-closed (and therefore O-closed) ideals.
\end{enumerate}
\end{theorem}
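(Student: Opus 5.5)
We prove (i) and (ii) in order, working directly from the definition of order convergence.

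\emph{Part (i), first claim.} Let \(x\in A^O_1\). Choose a net \((a_\gamma)\subseteq A\) with \(a_\gamma\xrightarrow{O}x\), witnessed by a directed set \(M\) and a filtered set \(N\) with \(\bigvee M=x=\bigwedge N\). Fix \(m\in M\) and pick any \(n\in N\). The net is eventually in \([m,n]\), so some \(a_\gamma\) satisfies \(m\le a_\gamma\). Since \(A\) is a downset, \(m\in A\). Hence \(M\subseteq A\), and \(M\) is directed with supremum \(x\). Indexing \(M\) by itself gives an increasing net in \(A\) with supremum \(x\).

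\emph{Part (i), second claim.} Assume \(L\) is infinitely distributive and let \(y\le x\in A^O_1\). Take \(M\subseteq A\) directed with \(\bigvee M=x\). By JID,
\[
y=y\wedge x=\bigvee_{m\in M}(y\wedge m).
\]
The family \(\{y\wedge m: m\in M\}\) is directed and lies in \(A\), because \(A\) is a downset. An increasing net whose supremum is \(z\) order converges to \(z\): take the net itself as the directed set and \(\{z\}\) as the filtered set. Hence \(y\in A^O_1\).

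\emph{Part (ii).} Let \(A\) be an ideal and \(L\) infinitely distributive. The inclusion \(A^O_1\subseteq A^{\mathfrak uO}_1\) follows from the cited Theorem~4, since \(O\)-convergence implies \(\mathfrak uO\)-convergence. For the reverse inclusion, let \(a_\gamma\xrightarrow{\mathfrak uO}x\) with \(a_\gamma\in A\), and pick any \(a_0\in A\) (\(A\) is nonempty). Taking \(s=t\wedge a_0\le t\), we get \((a_\gamma\wedge t)\vee(t\wedge a_0)\xrightarrow{O}(x\wedge t)\vee(t\wedge a_0)\) for each \(t\in L\). The left-hand terms lie in \(A\), since \(A\) is an ideal, so \((x\wedge t)\vee(t\wedge a_0)\in A^O_1\). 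As \(A^O_1\) is a downset by (i), it follows that \(x\wedge t\in A^O_1\) for every \(t\). Taking \(t=x\) gives \(x\in A^O_1\).

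It remains to show that \(A^O_1\) is a \(\mathfrak uO\)-closed ideal. It is a downset by (i). For closure under joins, take \(x,y\in A^O_1\) with directed sets \(M_x,M_y\subseteq A\). Then \(\{m\vee m'\}\) is directed, lies in \(A\), and has supremum \(x\vee y\) by associativity of suprema. Hence \(x\vee y\in A^O_1\).

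The main obstacle is idempotence: showing \((A^O_1)^{\mathfrak uO}_1\subseteq A^O_1\). Since \(A^O_1\) is an ideal, the previous paragraph gives \((A^O_1)^{\mathfrak uO}_1=(A^O_1)^O_1\). So it suffices to show that if \(x=\bigvee D\) with \(D\) directed in \(A^O_1\), then \(x\in A^O_1\). Each \(d\in D\) is \(\bigvee M_d\) with \(M_d\subseteq A\) directed. Let \(E=\{e\in A: e\le d \text{ for some } d\in D\}\). Then \(E\) is an ideal-type directed subset of \(A\): it is directed because \(A\) is an ideal and \(D\) is directed, and it contains every \(M_d\). Its supremum is \(x\), since any upper bound of \(E\) bounds every \(M_d\), hence every \(d\), hence \(x\). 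Thus \(x\in A^O_1\).

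Finally, every \(\mathfrak uO\)-closed set is \(O\)-closed. Indeed, \(B\subseteq B^O_1\subseteq B^{\mathfrak uO}_1=B\) by order continuity, so \(B=B^O_1\).
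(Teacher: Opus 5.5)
The paper does not prove this statement; it is imported from Abela--Chetcuti (Proposition~15 and Theorem~16 there), so there is no in-paper argument to compare with. On its own terms your proof is correct and complete, and appealing to the order-continuity theorem quoted at the start of Section~3 is legitimate.

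The key step is your observation in (i): for a downset \(A\), the directed set \(M\) witnessing \(a_\gamma\xrightarrow{O}x\) already lies in \(A\). Hence \(A^O_1\) is exactly the set of suprema of directed subsets of \(A\). From this, JID gives the downset property, and closure of \(A\) under finite joins gives closure of \(A^O_1\) under finite joins. Your set \(E\) then shows that a directed supremum of elements of \(A^O_1\) stays in \(A^O_1\), using only that \(A\) is an ideal.

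Two small remarks. First, reducing idempotence to directed suprema \(x=\bigvee D\) with \(D\subseteq A^O_1\) is part (i) applied to the downset \(A^O_1\); say so explicitly. Second, for \(A^{\mathfrak uO}_1\subseteq A^O_1\) you may take \(t=x\) and \(s=x\wedge a_0\) at once. The terms \((a_\gamma\wedge x)\vee(x\wedge a_0)\) lie in \(A\) and \(O\)-converge to \(x\), so the detour through the downset property is unnecessary.

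You can also dispense with the order-continuity theorem entirely. If \(x=\bigvee M\) with \(M\subseteq A\) directed, JID gives \(\bigvee_{m\in M}\bigl((m\wedge t)\vee s\bigr)=(x\wedge t)\vee s\) for all \(s\le t\). So the net \((m)_{m\in M}\) \(\mathfrak uO\)-converges to \(x\), while \(O\)-closedness of \(A^O_1\) is already supplied by \(E\). This version shows the statement holds under JID alone, whereas the quoted theorem is formulated for full infinite distributivity.
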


In particular, for ideals in infinitely distributive lattices, the first
\(O\)-adherence already coincides with the O-closure.  This fact
underlies the description of the closure operator $A\longmapsto A^{\sigma_L}$
used throughout the sequel.

\subsection{The $\sigma_L$-closure operator}

The main difficulty in studying regular embeddings of infinitely distributive lattices is that 
ordinary ideal generation is generally too small to capture limits of increasing families. 
To remedy this we introduce a closure operation based on order convergence. 
The resulting lattice of O-closed ideals will later play the role ordinarily occupied by the
 Dedekind-MacNeille completion and will provide the framework for extending lattice
  homomorphisms. The following theorem will be crucial in our treatment of
\(\sigma_L\)-closures. Throughout, a lattice is called \emph{Dedekind complete} if every nonempty subset that is bounded above has a supremum and every nonempty subset that is bounded below has an infimum; it is called \emph{complete} if every subset, including the empty set, has both a supremum and an infimum.

\begin{proposition}\label{3.4}
Let $ L$ be an infinitely distributive lattice.
\begin{enumerate}[label={\rm(\roman*)}]
    \item $A^{\sigma_{{{ L}}}}=\bigl( \mathscr H_ L[A]\bigr)^O_1$ for every $A\subseteq  L$.
    \item If $A\subseteq  L$ is a sublattice, a point $x\in  L$ belongs to $A^{\sigma_ L}$ iff $x=\bigvee_{a\in A}^{L} x\wedge a$. (Here the displayed supremum is taken only when the indexed family is nonempty.) 
\end{enumerate}
\end{proposition}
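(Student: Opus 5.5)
For part (i) we use Theorem~\ref{ThmIdealOclosure}. The ideal $I=\mathscr H_L[A]$ is in particular a downset of the infinitely distributive lattice $L$. Part (ii) of that theorem says $I^O_1$ is an $O$-closed ideal. It contains $I$, since constant nets order converge. Conversely, any $O$-closed set containing $I$ contains every $O$-limit of a net from $I$, hence contains $I^O_1$. So $I^O_1$ is the smallest $O$-closed ideal containing $A$, i.e. $A^{\sigma_L}=\Ocl{I}=I^O_1$. One degenerate case must be treated: if $A=\emptyset$, we set $\mathscr H_L[A]=\emptyset$, and everything is trivially empty.

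For (ii), let $A$ be a nonempty sublattice. Then $I=\mathscr H_L[A]=\{y: y\le a \text{ for some } a\in A\}$, because $A$ is closed under finite joins.

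($\Leftarrow$) Suppose $x=\bigvee_{a\in A}(x\wedge a)$. The family $\{x\wedge a\}_{a\in A}$ is directed upward, since $(x\wedge a)\vee(x\wedge b)\le x\wedge(a\vee b)$ with $a\vee b\in A$. Index it as an increasing net $(x\wedge a)_{a\in A}$ over $A$ with its order. Take $M=\{x\wedge a\}$ and $N=\{x\}$; this shows the net order converges to $x$. The net lies in $I$, so $x\in I^O_1=A^{\sigma_L}$ by (i).

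($\Rightarrow$) Let $x\in I^O_1$. By Theorem~\ref{ThmIdealOclosure}(i), $x=\bigvee_\gamma y_\gamma$ for an increasing net $(y_\gamma)\subseteq I$. Each $x\wedge a\le x$, so $x$ is an upper bound of $\{x\wedge a\}_{a\in A}$. Now let $u$ be any upper bound of $\{x\wedge a: a\in A\}$. For each $\gamma$ pick $a_\gamma\in A$ with $y_\gamma\le a_\gamma$. Then $y_\gamma\le x\wedge a_\gamma\le u$, so $u\ge\bigvee_\gamma y_\gamma=x$. Hence $x=\bigvee_{a\in A}(x\wedge a)$.

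Interestingly, the forward direction needs no distributivity beyond what is used in Theorem~\ref{ThmIdealOclosure}. The point needing the most care is (i): one must check that $I^O_1$ is genuinely closed, which is exactly what the cited theorem provides. This is where infinite distributivity enters; without it, the first adherence may fail to be $O$-closed or to be an ideal. The remaining work is bookkeeping about the empty case and the directedness of $\{x\wedge a\}$, which uses that $A$ is a sublattice.
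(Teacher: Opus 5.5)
Your proof is correct and follows the paper's argument essentially step for step. Part (i) goes through Theorem~\ref{ThmIdealOclosure}(ii) plus minimality, and part (ii) through the increasing net from Theorem~\ref{ThmIdealOclosure}(i), each term dominated by some element of $A$. The differences are small improvements: you spell out the directedness of $\{x\wedge a\}_{a\in A}$ that the paper leaves implicit in the ($\Leftarrow$) direction. You also rightly note that the paper's appeal to JID in the ($\Rightarrow$) direction is unnecessary, because $y_\gamma\le x$ already gives $x\wedge y_\gamma=y_\gamma$.
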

\begin{proof}
{\rm(i)}~Every O-closed ideal of $ L$ that contains $A$ must 
contain $\bigl( \mathscr H_ L[A]\bigr)^O_1$.  So it suffices to verify that  
$\bigl( \mathscr H_ L[A]\bigr)^O_1$ is an O-closed ideal of $ L$. 
This is guaranteed by {\rm(ii)} of Theorem \ref{ThmIdealOclosure}.

{\rm(ii)}~If $x\in L$ satisfies $x=\bigvee_{a\in A}^ L x\wedge a$, then $x$ belongs to the first 
O-adherence of $ \mathscr H_ L[A]$, and therefore $x\in A^{\sigma_ L}$.  Conversely, 
if $x\in A^{\sigma_ L}$, then $x\in \bigl( \mathscr H_ L[A]\bigr)^O_1$ by {\rm(i)}, so by  {\rm(i)} of Theorem \ref{ThmIdealOclosure}, there exists an increasing net $(b_\gamma)_{\gamma \in \Gamma}$ in 
$ \mathscr H_ L[A]$ with supremum $x$ (in $ L$). For every $\gamma$ there exists 
$a_{\gamma}\in [A]$, and so in $A$ by assumption,  such that $b_\gamma\le a_{\gamma}$. 
Let $c\in L$ be any upper bound of $\{x\wedge a:a\in A\}$. Then $c\ge x\wedge a_\gamma\ge x\wedge b_\gamma$ for every $\gamma$. By JID,
\[x=x\wedge\bigvee_\gamma^L b_\gamma=\bigvee_\gamma^L(x\wedge b_\gamma)\leq c.\]
Thus $x$ is the least upper bound of $\{x\wedge a:a\in A\}$, and hence $x=\bigvee_{a\in A}^L x\wedge a$.  
\end{proof}

Given subsets $A$ and $B$ of the lattice $L$, let $A\vee B:=\{a\vee b:a\in A,\,b\in B\}$ and $A\wedge B:=\{a\wedge b: a\in A,\,b\in B\}$. If $A$ and $B$ are ideals, then $A\wedge B=A\cap B$ is an ideal.  If, in addition, $L$ is distributive, then $A\vee B$ is an ideal as well.

\begin{proposition}\label{p3}
Let $A$ and $B$ be ideals in a lattice $L$.
\begin{enumerate}[label={\rm(\roman*)}]
\item $A^{\sss O}_1\,\wedge\, B^{\sss O}_1\,\supseteq\,(A\wedge B)^{\sss O}_1$ and if $L$ is 
infinitely distributive,  then $A^{\sss O}_1\,\wedge\, B^{\sss O}_1\,=\,(A\wedge B)^{\sss O}_1$.
    \item  $A^{\sss O}_1\,\vee\, B^{\sss O}_1\,\subseteq\,(A\vee B)^{\sss O}_1$ and if $L$ is  
    distributive and Dedekind complete, 
    then $A^{\sss O}_1\,\vee\, B^{\sss O}_1\,=\,(A\vee B)^{\sss O}_1$.
\end{enumerate}
\end{proposition}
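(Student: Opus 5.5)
The plan is to prove each of the four claims directly from the definition of order convergence. For the equality cases, the main tool is (i) of Theorem \ref{ThmIdealOclosure}: every point of the first $O$-adherence of a downset is the supremum of an increasing net from that downset. The inclusions (i) ``$\supseteq$'' and (ii) ``$\subseteq$'' should be routine; the reverse inclusions use infinite distributivity and completeness respectively.

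\emph{Part (i).} For the inclusion, let $x\in (A\wedge B)^{O}_1$, with a net $c_\gamma\in A\cap B$ and $c_\gamma\xrightarrow{O}x$. The same net witnesses $x\in A^O_1$ and $x\in B^O_1$. Hence $x=x\wedge x\in A^O_1\wedge B^O_1$.

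For equality under infinite distributivity, take $x\in A^O_1$ and $y\in B^O_1$. By Theorem \ref{ThmIdealOclosure}(i), pick increasing nets $a_\alpha\uparrow x$ in $A$ and $b_\beta\uparrow y$ in $B$. Index by the product $(\alpha,\beta)$ and set $c_{\alpha,\beta}=a_\alpha\wedge b_\beta\in A\cap B$; this net is increasing. Applying JID twice gives
\[
x\wedge y=\bigvee_\alpha\bigl(a_\alpha\wedge y\bigr)=\bigvee_\alpha\bigvee_\beta a_\alpha\wedge b_\beta .
\]
So $c_{\alpha,\beta}$ is an increasing net with supremum $x\wedge y$. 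An increasing net order converges to its supremum: take $M$ to be the net's range and $N=\{x\wedge y\}$. Therefore $x\wedge y\in(A\wedge B)^O_1$.

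\emph{Part (ii).} For the inclusion, take $x\in A^O_1$ and $y\in B^O_1$ witnessed by $a_\gamma\xrightarrow{O}x$ (with sets $M_1,N_1$) and $b_\delta\xrightarrow{O}y$ (with sets $M_2,N_2$). Consider the product-indexed net $a_\gamma\vee b_\delta$, and use $M_1\vee M_2$ and $N_1\vee N_2$ as the witnessing sets. The set $M_1\vee M_2$ is directed with supremum $x\vee y$, since $\bigvee(M_1\vee M_2)=\bigvee M_1\vee\bigvee M_2$ holds in any lattice. The main obstacle is $\bigwedge(N_1\vee N_2)=x\vee y$. A lower bound $z$ of $N_1\vee N_2$ satisfies $z\le n_1\vee n_2$ for all pairs, but concluding $z\le x\vee y$ seems to require MID. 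So I would avoid two-sided witnesses.

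Instead I would first show, for downsets in general, that $x\in A^O_1$ iff $x$ is the supremum of an increasing net in $A$. The forward direction is Theorem \ref{ThmIdealOclosure}(i); the converse is the remark above. With increasing nets $a_\alpha\uparrow x$ and $b_\beta\uparrow y$, the net $a_\alpha\vee b_\beta\in A\vee B$ is increasing with supremum $x\vee y$. This holds in any lattice: an upper bound $u$ of all $a_\alpha\vee b_\beta$ bounds every $a_\alpha$ and every $b_\beta$, hence bounds $x$ and $y$. Therefore $x\vee y\in(A\vee B)^O_1$.

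For the reverse inclusion in a distributive, Dedekind complete lattice, let $z\in(A\vee B)^O_1$. Since $L$ is distributive, $A\vee B$ is an ideal, so pick an increasing net $a_\gamma\vee b_\gamma\uparrow z$. I would set $x=\bigvee_\gamma^{L}a_\gamma$ and $y=\bigvee_\gamma^{L}b_\gamma$; both exist by Dedekind completeness, being bounded by $z$. The nets $a_\gamma$, $b_\gamma$ need not be increasing, so I would replace them by the directed ideals they generate. Then $x$ is the supremum of the directed set $\{a\in A: a\le a_{\gamma}\text{ for some }\gamma\}$, hence $x\in A^O_1$, and similarly $y\in B^O_1$. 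Finally $x\vee y=z$: each $a_\gamma\vee b_\gamma\le x\vee y$ gives $z\le x\vee y$, and conversely $x,y\le z$. Thus $z\in A^O_1\vee B^O_1$. I expect the only delicate point to be making the increasing-net characterisation precise and checking that directed subsets of a downset suffice. Distributivity enters only to make $A\vee B$ an ideal, so that Theorem \ref{ThmIdealOclosure}(i) applies to it.
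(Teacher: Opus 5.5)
Your proposal is correct. Part (ii) is essentially the paper's argument. The inclusion goes through the increasing product net $a_\alpha\vee b_\beta$, and you are right to abandon two-sided witnesses. The reverse inclusion goes through $x=\bigvee_\gamma a_\gamma$, $y=\bigvee_\gamma b_\gamma$, $z=x\vee y$, with a monotone replacement of the component nets.

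There is one slip. The set $\{a\in A: a\le a_\gamma \text{ for some } \gamma\}$ is only the down-closure of $\{a_\gamma\}$, and it need not be directed: the $a_\gamma$ may alternate between two incomparable elements while $a_\gamma\vee b_\gamma$ stays constant. Your phrase ``the ideals they generate'' calls for the set of elements below some finite join $a_{\gamma_1}\vee\cdots\vee a_{\gamma_n}$. Equivalently, use the finite joins $a_F=\bigvee_{\gamma\in F}a_\gamma$ over finite $F\subseteq\Gamma$. These lie in $A$ because $A$ is an ideal, and they increase to $x$. This is exactly the paper's device.

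For the equality in part (i) you take a genuinely different route.
\begin{itemize}
\item The paper first uses Theorem~\ref{ThmIdealOclosure}(ii) to know that $A^O_1$ and $B^O_1$ are ideals, so $A^O_1\wedge B^O_1=A^O_1\cap B^O_1$. For $x$ in the intersection it then uses Proposition~\ref{3.4}(ii) to write $x=\bigvee_{a\in A}x\wedge a=\bigvee_{b\in B}x\wedge b$, and applies JID.
\item You treat a general element $x\wedge y$ directly. You take increasing nets from Theorem~\ref{ThmIdealOclosure}(i), apply JID twice, and use the increasing product net $a_\alpha\wedge b_\beta$ in $A\cap B$.
\end{itemize}
Your argument needs only part (i) of that theorem, which holds for downsets in any lattice, together with JID. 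So it proves the equality under JID alone, without MID and without knowing in advance that $A^O_1$ is an ideal. Combined with your first inclusion it also gives $A^O_1\cap B^O_1\subseteq A^O_1\wedge B^O_1=(A\cap B)^O_1\subseteq A^O_1\cap B^O_1$.

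The paper's version is shorter once the $\sigma_L$-machinery of Proposition~\ref{3.4} is in place. However, it relies on the full infinite-distributivity hypothesis through Theorem~\ref{ThmIdealOclosure}(ii). Your proof of the first inclusion, which uses the same net as a witness in $A$ and in $B$, is also slightly more direct than the paper's detour through increasing nets.
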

\begin{proof}
{\rm(i)}~If $x\in (A\cap B)^{\sss O}_1$ then $x$ is equal to the supremum of an increasing net in $A\cap B$ by Theorem \ref{ThmIdealOclosure} {\rm(i)}.  Thus, $x\in A^{\sss O}_1\cap B^{\sss O}_1\subseteq A^{\sss O}_1\,\wedge\, B^{\sss O}_1$.  If $L$ is infinitely distributive, by {\rm(ii)} of the same theorem,  $A^{\sss O}_1$ and $B^{\sss O}_1$ are ideals, and therefore $A^{\sss O}_1\wedge B^{\sss O}_1=A^{\sss O}_1\,\cap\, B^{\sss O}_1$. Thus, if $x\in A^{\sss O}_1\,\wedge\, B^{\sss O}_1$, then, using Proposition \ref{3.4},  one has $x=\bigvee_{a\in A}x\wedge a=\bigvee_{b\in B}x\wedge b$.  Then
\[x=x\wedge x\ =\ \bigvee_{a\in A}x\wedge a\ \ \wedge\ \ \bigvee_{b\in B}x\wedge b\ =\ \bigvee_{a\in A,\,b\in B}x\wedge a\wedge b\,,\]
and therefore $x\in (A\wedge B)^{\sss O}_1=(A\cap B)^{\sss O}_1$.

{\rm(ii)}~Let $a\in A^{\sss O}_1$ and $b\in B^{\sss O}_1$. Then there are 
nets $(a_\gamma)_{\gamma\in\Gamma} \subseteq A$ and 
$(b_\omega)_{\omega\in\Omega} \subseteq B$, increasing to $a$ and $b$, respectively.  
Then $(a_\gamma\vee b_\omega)_{(\gamma,\omega)\in \Gamma\times\Omega}$ is a net in 
$A\vee B$ that increases to $a\vee b$. This shows that 
$A^{\sss O}_1\,\vee\,B^{\sss O}_1\subseteq(A\vee B)^{\sss O}_1$.  
Assume now that $L$ is distributive and Dedekind complete, and let 
$x\in (A\vee B)^{\sss O}_1$.  The distributivity of $L$ implies that $A\vee B$ is an ideal, 
and therefore by Theorem \ref{ThmIdealOclosure} there exists an increasing net 
$(a_\gamma\vee b_\gamma)_{\gamma\in\Gamma}$ in $A\vee B$ with supremum $x$. 
Dedekind completeness gives $a:=\bigvee_\gamma a_\gamma$ and $b:=\bigvee_\gamma b_\gamma$. Then $x=a\vee b$. The component nets need not be increasing, so let $\mathcal F(\Gamma)$ be the directed set of finite subsets of $\Gamma$ and put $a_F:=\bigvee_{\gamma\in F}a_\gamma$. Since $A$ is an ideal, $a_F\in A$, and $a_F\uparrow a$. Hence $a\in A_1^O$. Similarly $b\in B_1^O$. Therefore $x=a\vee b\in A_1^O\vee B_1^O$. 
\end{proof}

\begin{corollary}\label{c16} Let $ A$, $B$ be ideals in an infinitely distributive lattice $L$.   
\begin{enumerate}[label={\rm(\roman*)}]
\item $\Ocl{A}\cap\Ocl{B}=\Ocl{A\cap B}$.
\item If $L$ is also Dedekind complete, then $\Ocl{A}\vee\Ocl{B}=\Ocl{A\vee B}$.
\end{enumerate}    
\end{corollary}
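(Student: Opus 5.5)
The plan is to reduce both parts to Proposition~\ref{p3}, using the fact from Theorem~\ref{ThmIdealOclosure}(ii) that, in an infinitely distributive lattice, the first \(O\)-adherence of an ideal is already \(O\)-closed. First I would record that for an ideal \(I\subseteq L\) one has \(\Ocl{I}=I^O_1\). Any \(O\)-closed set containing \(I\) contains \(I^O_1\), since constant nets order converge and an \(O\)-closed set contains the limits of its nets. Conversely, \(I^O_1\) is itself an \(O\)-closed ideal by Theorem~\ref{ThmIdealOclosure}(ii), and it contains \(I\) via constant nets. Hence \(\Ocl{I}=I^O_1\), and this set is an ideal.

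For (i), I apply this observation to \(A\), \(B\) and \(A\cap B\); the last is an ideal because \(A\) and \(B\) are. This gives
\[
\Ocl{A}\cap\Ocl{B}=A^O_1\cap B^O_1 .
\]
Since \(A^O_1\) and \(B^O_1\) are ideals, \(A^O_1\cap B^O_1=A^O_1\wedge B^O_1\). Proposition~\ref{p3}(i), in its infinitely distributive form, identifies this with \((A\wedge B)^O_1=(A\cap B)^O_1=\Ocl{A\cap B}\).

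For (ii), I use that \(L\) is distributive, so \(A\vee B\) is an ideal, as noted before Proposition~\ref{p3}. The observation then gives \(\Ocl{A\vee B}=(A\vee B)^O_1\). Since \(L\) is also Dedekind complete, Proposition~\ref{p3}(ii) yields \((A\vee B)^O_1=A^O_1\vee B^O_1=\Ocl{A}\vee\Ocl{B}\), where \(\vee\) is the elementwise operation on subsets.

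The main point needing care is the identification \(\Ocl{I}=I^O_1\) for ideals, in particular that no iteration of adherences is needed. This is exactly what Theorem~\ref{ThmIdealOclosure}(ii) supplies, so beyond it the argument is bookkeeping. In (ii) I will also state explicitly that \(\Ocl{A}\vee\Ocl{B}\) means the elementwise join of subsets defined before Proposition~\ref{p3}, not the join in \(\mathfrak I_L\). Both readings agree here, because the elementwise join is shown to be an \(O\)-closed ideal, namely \(\Ocl{A\vee B}\).
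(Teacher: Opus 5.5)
Your proposal is correct and is the paper's intended argument. The corollary is stated without a separate proof precisely because Theorem~\ref{ThmIdealOclosure}(ii) (see also the remark following it) gives $\Ocl{I}=I^O_1$ for every ideal $I$ of an infinitely distributive lattice, and both parts then follow from Proposition~\ref{p3} applied to the ideals $A\cap B$ and $A\vee B$, exactly as you do.
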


\begin{theorem}
    The collection $\mathfrak I_ L$ of all O-closed ideals of an infinitely distributive lattice 
    $ L$ forms a complete lattice when ordered by set inclusion.  
    For any family $\{A_i:i\in I\}$ in $\mathfrak I_ L$, we have,
    \begin{equation}\label{re1} 
    \bigvee_i{}^{\!\mathfrak I_ L} A_i=\left(\bigcup_i A_i\right)^{\sigma_ L}\quad\text{and}\quad \bigwedge_i{}^{\!\mathfrak I_ L}A_i=\bigcap_i A_i.
    \end{equation}
    In addition, $\mathfrak I_ L$ satisfies the Join-Infinite Distributive Law.
    \end{theorem}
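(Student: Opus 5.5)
First I will show that $\mathfrak I_L$ is closed under arbitrary intersections. Once that is done, completeness and the two formulas in \eqref{re1} follow from standard closure-system arguments.

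\emph{Intersections.} Let $\{A_i\}\subseteq\mathfrak I_L$ and put $A=\bigcap_i A_i$. Then $A$ is a downset closed under finite joins, so it is an ideal; for the empty family we read the intersection as $L$ itself, which is an $O$-closed ideal. To see that $A$ is $O$-closed, take $x\in A^O_1$. By Theorem~\ref{ThmIdealOclosure}(i), $x$ is the supremum of an increasing net in $A\subseteq A_i$, so $x\in (A_i)^O_1=A_i$ for each $i$. Hence $A\in\mathfrak I_L$, and $A$ is clearly the greatest lower bound of the family under inclusion. A family of subsets closed under arbitrary intersections is a complete lattice. In it the join of $\{A_i\}$ is the smallest member containing $\bigcup_i A_i$, which by definition is $\bigl(\bigcup_iA_i\bigr)^{\sigma_L}$. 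By Proposition~\ref{3.4}(i) this equals $\bigl(\mathscr H_L[\bigcup_iA_i]\bigr)^O_1$. This gives \eqref{re1}.

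\emph{JID.} Fix $B\in\mathfrak I_L$ and a family $\{A_i\}$. I must show
\[
B\cap\Bigl(\bigcup_iA_i\Bigr)^{\sigma_L}\subseteq\Bigl(\bigcup_i(B\cap A_i)\Bigr)^{\sigma_L};
\]
the reverse inclusion is monotonicity. Write $H=\mathscr H_L[\bigcup_iA_i]$. Because $L$ is distributive, $H$ consists of the elements lying below finite joins $a_1\vee\cdots\vee a_n$ with $a_k\in A_{i_k}$.

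By Proposition~\ref{3.4}(i) and Corollary~\ref{c16}(i), with $B=\Ocl{B}$,
\[
B\cap H^O_1=\Ocl{B}\cap\Ocl{H}=\Ocl{B\cap H}.
\]
So it suffices to show $B\cap H\subseteq \bigl(\bigcup_i(B\cap A_i)\bigr)^{\sigma_L}$, since the right-hand side is $O$-closed. Take $y\in B\cap H$ with $y\le a_1\vee\cdots\vee a_n$. Finite distributivity gives
\[
y=\bigvee_k (y\wedge a_k).
\]
Each $y\wedge a_k$ lies in $B\cap A_{i_k}$, because both sets are downsets. Therefore $y$ lies in the ideal generated by $\bigcup_i(B\cap A_i)$, which completes the argument.

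The main obstacle is the reduction from $B\cap H^O_1$ to $B\cap H$. That step is exactly where infinite distributivity enters, through Corollary~\ref{c16}(i), which itself rests on Proposition~\ref{p3}(i). So I will check that the corollary applies with $A=B$ (already an ideal) and $H$ (an ideal), and that $\Ocl{H}=H^O_1$ holds by Theorem~\ref{ThmIdealOclosure}(ii).
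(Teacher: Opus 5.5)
Your proof is correct and follows essentially the paper's route. Completeness and the two formulas come from the $O$-closed ideals forming a closure system (you check closure under intersections directly, where the paper cites Birkhoff's closure-operator theorem), and JID comes from the identity $\mathscr H_L[\bigcup_iA_i]\cap B=\mathscr H_L[\bigcup_i(A_i\cap B)]$, obtained by finite distributivity and combined with Corollary~\ref{c16}(i), exactly as in the paper.
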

    \begin{proof}
     The operation $A\mapsto A^{\sigma_ L}$ ($A\subseteq L$) is a closure operator 
     (see \cite[p. 111]{Birkhoff}).  The subsets of $ L$ satisfying $A=A^{\sigma_ L}$ are precisely the 
     O-closed ideals.  By \cite[Theorem 2, p. 112]{Birkhoff}, $\mathfrak I_ L$ is a complete lattice 
     under set inclusion, and the equations (\ref{re1}) are satisfied.  

     Let us show that $\mathfrak I_ L$ satisfies the Join-Infinite Distributive Law.  
     To this end let $A_i\in\mathfrak I_ L$ for every $i\in I$, and let $A\in\mathfrak I_ L$.  
     
We first claim that
     \[\mathscr H_L\!\biggl[\bigcup_{i\in I}A_i\biggr]\cap A
     =\mathscr H_L\!\biggl[\bigcup_{i\in I}(A_i\cap A)\biggr].\]
     The inclusion from right to left is immediate. Conversely, if
     \(x\in\mathscr H_L[\bigcup_iA_i]\cap A\), then for some
     \(a_1,\ldots,a_n\), with \(a_k\in A_{i_k}\),
     \(x\leq a_1\vee\cdots\vee a_n\). Since \(A\) is a downset,
     \(x\wedge a_k\in A\cap A_{i_k}\), and distributivity gives
     \[
       x=x\wedge(a_1\vee\cdots\vee a_n)
       =(x\wedge a_1)\vee\cdots\vee(x\wedge a_n).
     \]
     This proves the claim, and therefore
     \[\biggl(\,\bigcup_{i\in I}A_i\biggr)^{\sigma_ L}\cap A=
     \biggl(\,\bigcup_{i\in I}(A_i\cap A)\biggr)^{\sigma_ L}.\]
     by Corollary \ref{c16}~{\rm(i)}. This shows that $\mathfrak I_ L$ satisfies the 
     Join-Infinite Distributive Law. 
\end{proof}

\subsection{Homomorphisms induced on \(O\)-closed ideals}

\begin{proposition}\label{pr2}
Let $ L$ be an infinitely distributive lattice and $Y$ a regular sublattice of $ L$. Then for every $A\subseteq Y$,
\begin{enumerate}[label={\rm(\roman*)}]
\item $A^{\sigma_Y}=A^{\sigma_ L}\cap Y$, and
    \item $A^{\sigma_Y\sigma_{ L}}=A^{\sigma_{ L}}$,    
\end{enumerate}
\end{proposition}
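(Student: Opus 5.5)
The plan is to reduce both parts to the pointwise description of the closure in Proposition~\ref{3.4}(ii). This needs two preliminary observations.

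\textbf{Step 1: $Y$ is infinitely distributive.} Suppose $B\subseteq Y$ has a supremum $\bigvee^Y B$ in $Y$, and let $x\in Y$. By regularity, $\bigvee^Y B=\bigvee^L B$. JID in $L$ then gives
\[
x\wedge\bigvee^Y B=\bigvee^L_{b\in B}(x\wedge b).
\]
The left-hand side lies in $Y$, and every $x\wedge b$ lies in $Y$. So this $L$-supremum is an element of $Y$ that is an upper bound of $\{x\wedge b:b\in B\}$ and is below every upper bound in $Y$. Hence it is also the $Y$-supremum, and JID holds in $Y$. MID follows dually. Consequently Theorem~\ref{ThmIdealOclosure} and Proposition~\ref{3.4} apply to $Y$ as well as to $L$.

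\textbf{Step 2: reduce to sublattices.} Proposition~\ref{3.4}(ii) requires $A$ to be a sublattice, so I replace $A$ by the sublattice $[A]$ it generates. This sublattice is the same whether computed in $Y$ or in $L$, since $Y$ is a sublattice of $L$. Moreover $\mathscr H_Y[A]=\mathscr H_Y[[A]]$ and $\mathscr H_L[A]=\mathscr H_L[[A]]$. Therefore $A^{\sigma_Y}=[A]^{\sigma_Y}$ and $A^{\sigma_L}=[A]^{\sigma_L}$, and we may assume $A$ is a sublattice. The degenerate case $A=\emptyset$ is trivial: both closures are then the closure of the empty ideal, and they agree by the same argument or directly.

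\textbf{Part (i).} Fix $x\in Y$. By Proposition~\ref{3.4}(ii) applied in $Y$, $x\in A^{\sigma_Y}$ iff $x=\bigvee^Y_{a\in A}(x\wedge a)$. Applied in $L$, $x\in A^{\sigma_L}$ iff $x=\bigvee^L_{a\in A}(x\wedge a)$. The family $\{x\wedge a: a\in A\}$ lies in $Y$, so the two conditions are equivalent:
\begin{itemize}
\item If the $Y$-supremum exists and equals $x$, then regularity makes it the $L$-supremum.
\item If the $L$-supremum exists and equals $x\in Y$, then it is an upper bound lying in $Y$ and below all upper bounds in $Y$, hence it is the $Y$-supremum.
\end{itemize}
Since $A^{\sigma_Y}\subseteq Y$, this yields $A^{\sigma_Y}=A^{\sigma_L}\cap Y$.

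\textbf{Part (ii).} This is formal. From $A\subseteq A^{\sigma_Y}$ and monotonicity of the closure operator $\sigma_L$, we get $A^{\sigma_L}\subseteq A^{\sigma_Y\sigma_L}$. Conversely, part (i) gives $A^{\sigma_Y}\subseteq A^{\sigma_L}$. Since $A^{\sigma_L}$ is an $O$-closed ideal of $L$, the smallest $O$-closed ideal of $L$ containing $A^{\sigma_Y}$ is contained in it, i.e.\ $A^{\sigma_Y\sigma_L}\subseteq A^{\sigma_L}$.

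The only genuinely delicate point is Step~1 together with the two-directional transfer of suprema between $Y$ and $L$ in part (i). In particular, one must use that an $L$-supremum which happens to lie in $Y$ is automatically the $Y$-supremum; the rest is bookkeeping.
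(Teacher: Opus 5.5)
Your proposal is correct and follows essentially the same route as the paper: reduce to the generated sublattice $[A]$, apply Proposition~\ref{3.4}(ii) in both $Y$ and $L$, transfer the supremum $\bigvee_{s\in[A]}(x\wedge s)$ between $Y$ and $L$ via regularity, and obtain part (ii) by the same formal closure-operator argument. The only difference is that you spell out why $Y$ is infinitely distributive and why an $L$-supremum lying in $Y$ is also the $Y$-supremum; the paper treats both as evident.
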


\begin{proof}
{\rm(i)}~ Clearly, if \(Y\leq_{\mathrm{reg}}L\) and \(L\) is infinitely distributive, then \(Y\) is infinitely distributive. For $x\in Y$,
\begin{align*}
x\in A^{\sigma_Y}\Leftrightarrow x\in \bigl([A]\bigr)^{\sigma_Y}&\Leftrightarrow x=\bigvee_{s\in [A]}{\!\!}^Y (x\wedge s)=\bigvee_{s\in [A]}{\!\!}^ L (x\wedge s)\\
&\Leftrightarrow x\in\bigl([A]\bigr)^{\sigma_ L}\\ &\Leftrightarrow x\in A^{\sigma_ L}.
\end{align*}

{\rm(ii)}~We have $A^{\sigma_Y}\subseteq A^{\sigma_ L}$ by {\rm(i)}.  Therefore $A^{\sigma_Y\sigma_{ L}}\subseteq A^{\sigma_{ L}\,\sigma_ L}=A^{\sigma_ L}$.  Conversely, $A\subseteq A^{\sigma_Y}$ and therefore $A^{\sigma_ L}\subseteq A^{\sigma_Y\sigma_{ L}}$.
\end{proof}

The purpose of the lattice $\mathfrak I_L$ is that it provides a completion-like object 
on which lattice homomorphisms extend naturally. The next results show that every 
lattice homomorphism induces a canonical morphism between lattices of O-closed ideals. 
This construction will later be transferred to Dedekind-MacNeille completions.

\begin{lemma}\label{l1} 
Let $K$ and $ L$ be infinitely distributive lattices.  
Let $T: K\to  L$ be a lattice homomorphism preserving every join that exists in its domain. 
Then, $T(A^{\sigma_{K}})\subseteq (T(A))^{\sigma_ L}$ for every  $A\subseteq K$.    
\end{lemma}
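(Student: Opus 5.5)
We reduce to the case of a sublattice and then use the pointwise characterisation of Proposition~\ref{3.4}~(ii). Put $S:=[A]$, the sublattice of $K$ generated by $A$. Every O-closed ideal containing $A$ also contains the ideal $\mathscr H_K[A]$, and this ideal contains $S$. Hence $A^{\sigma_K}=S^{\sigma_K}$. Since $T$ is a lattice homomorphism, $T(S)=[T(A)]$. For the same reason $(T(A))^{\sigma_L}=(T(S))^{\sigma_L}$. It therefore suffices to prove $T(S^{\sigma_K})\subseteq (T(S))^{\sigma_L}$ for a sublattice $S$ of $K$; the image $T(S)$ is then a sublattice of $L$.

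Let $x\in S^{\sigma_K}$. By Proposition~\ref{3.4}~(ii) applied in $K$, we have $x=\bigvee^K_{s\in S}(x\wedge s)$. This supremum exists in $K$, and $T$ preserves every join that exists in $K$. Using also that $T$ preserves binary meets,
\[
T(x)=\bigvee_{s\in S}{}^{\!L}\,T(x\wedge s)=\bigvee_{s\in S}{}^{\!L}\,\bigl(T(x)\wedge T(s)\bigr)=\bigvee_{b\in T(S)}{}^{\!L}\,\bigl(T(x)\wedge b\bigr).
\]
The last equality is a reindexing: the family $\{T(x)\wedge T(s):s\in S\}$ and the family $\{T(x)\wedge b:b\in T(S)\}$ are the same set. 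Now Proposition~\ref{3.4}~(ii), applied in the infinitely distributive lattice $L$ to the sublattice $T(S)$, gives $T(x)\in (T(S))^{\sigma_L}=(T(A))^{\sigma_L}$.

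The argument is short. The one step needing care is the degenerate case $A=\emptyset$, where the join indexed by $S$ is empty and Proposition~\ref{3.4}~(ii) does not apply. I would treat it directly. In that case $\mathscr H_K[\emptyset]$ is either empty or $\{\bot_K\}$, depending on the convention for the ideal generated by the empty set. Its O-closure is then either empty, or $\{\bot_K\}$ because a net in $\{\bot_K\}$ can only converge to $\bot_K$. Since $T$ preserves the empty join, $T(\bot_K)=\bot_L$, which lies in $(T(\emptyset))^{\sigma_L}$ under the same convention.

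Apart from this case, the only point to watch is that the reduction $A^{\sigma_K}=[A]^{\sigma_K}$ is legitimate; it holds simply because ideals are sublattices.
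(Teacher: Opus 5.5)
Your proposal is correct and follows essentially the same route as the paper. You reduce to the sublattice $[A]$ via $[A]^{\sigma_K}=A^{\sigma_K}$ and $T([A])=[T(A)]$, push the identity $x=\bigvee^K_{s\in[A]}(x\wedge s)$ from Proposition~\ref{3.4}~(ii) through $T$, and reapply the same characterization in $L$. Your separate treatment of $A=\emptyset$, using that $T$ preserves the empty join so that $T(\bot_K)=\bot_L$, is a small point the paper leaves implicit.
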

\begin{proof}
First, suppose that $A$ is a sublattice of $K$. Note that if $A$ is a sublattice of 
$K$, then $T(A)$ is a sublattice of $ L$ and therefore we can apply 
Proposition \ref{3.4}~{\rm(ii)}: if $x\in A^{\sigma_K}$, then,
\[T(x)=T\left(\bigvee_{a\in A}{}^{\!K}\,x\wedge a\right)=
\bigvee_{a\in A}{}^{\! L}\,\,T(x)\wedge T(a),\]
 and therefore,  $T(x)\in (T(A))^{\sigma_ L}$.

When $A\subseteq K$ is just a subset, observe that
$[A]^{\sigma_K}=A^{\sigma_K}$, and 
$(T([A]))^{\sigma_ L}=\bigl([T(A)]\bigr)^{\sigma_ L}=(T(A))^{\sigma_ L}$.
Therefore, $T(A^{\sigma_K})=T\left([A]^{\sigma_{K}}\right)\subseteq (T([A]))^{\sigma_{ L}}=
(T(A))^{\sigma_ L}$. 
\end{proof}

\begin{theorem}\label{tr4}
    Let $K$ and $ L$ be infinitely distributive lattices and let $T:K\to  L$ be a lattice homomorphism.   Define
    \[\hat T:\mathfrak I_{K}\to\mathfrak I_ L:A\mapsto (T(A))^{\sigma_ L}.\]
   \begin{enumerate}[label={\rm(\roman*)}]
    \item $\hat T$ is a meet homomorphism.
    \item If $T$ preserves every join that exists in its domain, then so does $\hat T$.
    \end{enumerate}
\end{theorem}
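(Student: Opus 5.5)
The plan is to work directly from the description $A^{\sigma_L}=\bigl(\mathscr H_L[A]\bigr)^O_1$ in Proposition~\ref{3.4}~{\rm(i)}, together with Corollary~\ref{c16}~{\rm(i)} and Lemma~\ref{l1}.

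First I will note that $\hat T$ is well defined, since $(T(A))^{\sigma_L}$ is by definition an $O$-closed ideal of $L$. I will also record that $\hat T$ is monotone: if $A\subseteq B$ then $T(A)\subseteq T(B)$, so $(T(A))^{\sigma_L}\subseteq (T(B))^{\sigma_L}$. Monotonicity gives one inclusion in each of {\rm(i)} and {\rm(ii)}. For meets it gives $\hat T(A\cap B)\subseteq \hat T(A)\cap\hat T(B)$. For joins it gives $\hat T(A_i)\subseteq \hat T\bigl(\bigvee_i^{\mathfrak I_K}A_i\bigr)$ for every $i$, hence $\bigvee_i^{\mathfrak I_L}\hat T(A_i)\subseteq \hat T\bigl(\bigvee_i^{\mathfrak I_K}A_i\bigr)$.

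For {\rm(i)}, I take $A,B\in\mathfrak I_K$. By Proposition~\ref{3.4}~{\rm(i)} and Corollary~\ref{c16}~{\rm(i)},
\[
\hat T(A)\cap\hat T(B)=\overline{\mathscr H_L[T(A)]}^{\,O}\cap\overline{\mathscr H_L[T(B)]}^{\,O}=\overline{\mathscr H_L[T(A)]\cap\mathscr H_L[T(B)]}^{\,O}.
\]
So it suffices to show $\mathscr H_L[T(A)]\cap\mathscr H_L[T(B)]\subseteq \mathscr H_L[T(A\cap B)]$. Since $A$ is an ideal, hence closed under finite joins and a downset, and $T$ preserves finite joins, an element of $\mathscr H_L[T(A)]$ is simply an element below some $T(a)$ with $a\in A$. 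So if $x\le T(a)$ and $x\le T(b)$ with $a\in A$ and $b\in B$, then $x\le T(a)\wedge T(b)=T(a\wedge b)$, and $a\wedge b\in A\cap B$. This is the only point where the ideal property of $A,B$ and the homomorphism property of $T$ interact. I expect it to be the main, though mild, obstacle, since one must check that ideal generation reduces to single dominating elements. Taking $O$-closures then gives $\hat T(A)\cap\hat T(B)\subseteq \hat T(A\cap B)$.

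For {\rm(ii)}, I assume $T$ preserves existing joins. Since $\mathfrak I_K$ is complete, every join exists, so I must show the remaining inclusion $\hat T\bigl(\bigvee_i A_i\bigr)\subseteq\bigvee_i\hat T(A_i)$. Using \eqref{re1} and Lemma~\ref{l1} with $A=\bigcup_iA_i$, I get
\[
T\Bigl(\bigl(\textstyle\bigcup_iA_i\bigr)^{\sigma_K}\Bigr)\subseteq \Bigl(\textstyle\bigcup_iT(A_i)\Bigr)^{\sigma_L}\subseteq \Bigl(\textstyle\bigcup_i\hat T(A_i)\Bigr)^{\sigma_L}=\textstyle\bigvee_i^{\mathfrak I_L}\hat T(A_i).
\]
Since the right-hand side is an $O$-closed ideal, applying $\sigma_L$ to the left-hand side yields the claim. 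The empty family is covered by the same computation with $\bigcup_i A_i=\emptyset$, so $\hat T$ also preserves the bottom element.
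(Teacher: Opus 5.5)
Your proof is correct. Part (ii) follows the paper's argument: monotonicity gives one inclusion, and Lemma~\ref{l1} applied to $\bigcup_i A_i$, followed by closing under $\sigma_L$, gives the other.

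Part (i) takes a different route. The paper works pointwise. It applies Proposition~\ref{3.4}~(ii) to the sublattices $T(A)$ and $T(B)$, writes $x=\bigvee_{a\in A}x\wedge T(a)=\bigvee_{b\in B}x\wedge T(b)$, and uses JID to show $x=\bigvee_{a,b}x\wedge T(a\wedge b)$, hence $x\in(T(A\cap B))^{\sigma_L}$. You instead first prove the ideal-level inclusion $\mathscr H_L[T(A)]\cap\mathscr H_L[T(B)]\subseteq\mathscr H_L[T(A\cap B)]$. This works because $T(A)$ is closed under finite joins, so $\mathscr H_L[T(A)]$ is just the downset of $T(A)$. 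You then pass to closures with Corollary~\ref{c16}~(i).

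Your version cleanly separates the two ingredients. The homomorphism $T$ and the ideal property of $A$ and $B$ enter only through a finitary statement that holds for any lattice homomorphism between any lattices. All use of the infinite distributivity of $L$ is packaged in Corollary~\ref{c16}, which concerns $L$ alone.

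The paper's computation is self-contained and needs neither the reduction of $\mathscr H_L[T(A)]$ to a downset nor Corollary~\ref{c16}. However, it essentially repeats the JID argument from the proof of Proposition~\ref{p3}~(i), with the sublattices $T(A)$ and $T(B)$ in place of ideals.
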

\begin{proof}
{\rm(i)}~We show that 
$\hat T(A\,\wedge^{\mathfrak I_{K}}\, B)=\hat T(A)\,\wedge^{\mathfrak I_ L}\,\hat T(B)$ for every $A, B\in \mathfrak I_{K}$.  On one-hand we observe that \[\hat T(A\,\wedge^{\mathfrak I_{K}}\, B)=\bigl(T(A\cap B)\bigr)^{\sigma_ L}\subseteq \bigl(T(A)\bigr)^{\sigma_ L}\cap \bigl(T(B)\bigr)^{\sigma_ L}=\hat T(A)\,\wedge^{\mathfrak I_ L}\,\hat T(B).\]  
For the reverse inclusion, we make use of Proposition \ref{3.4} {\rm(ii)}. Observe that $T(A)$ and $T(B)$ are sublattices of $ L$, and therefore, if $x\in \bigl(T(A)\bigr)^{\sigma_ L}\cap \bigl(T(B)\bigr)^{\sigma_ L}$ then $x=\bigvee_{a\in A}^ L \,x\wedge T(a)$ and $x=\bigvee_{b\in B}^ L \,x\wedge T(b)$.  Therefore, by the Join-Infinite Distributive law, we get 
\[T(a)\wedge x = T(a)\,\wedge\left(\bigvee_{b\in B}{\!}^ L \,x\wedge T(b)\right) =\bigvee_{b\in B}{\!}^ L \,\,T(a)\wedge x\wedge T(b)\,.\]
Thus, if $c\in  L$ satisfies $c\ge x\wedge T(a)\wedge T(b)$ for every $a\in A$, $b\in B$, then $c\ge x\wedge T(a)$ for every $a\in A$, and therefore $c\ge \bigvee_{a\in A}^ L\,x\wedge T(a) =x$.  This shows that 
\[\bigvee_{a\in A,\, b\in B}^ L x\wedge T(a\wedge b)=\bigvee_{a\in A,\, b\in B}^ L x\wedge T(a)\wedge T(b)=x.\]  Since $A\cap B=\{a\wedge b:a\in A,\, b\in B\}$, Proposition \ref{3.4} {\rm(ii)} yields $x\in \bigl(T(A\cap B)\bigr)^{\sigma_ L}=\hat T(A\,\wedge^{\mathfrak I_{K}}B)$.

{\rm(ii)}~Now we suppose that $T$ preserves arbitrary joins that exist in $K$.  Let us show that $\hat T\left(\bigvee_i^{\mathfrak I_K} A_i\right)=\bigvee_i^{\mathfrak I_ L}\hat T(A_i)$ for every family $\{A_i:i\in I\}\subseteq \mathfrak I_K$. 
The function $\hat T$ is isotone, and so $\hat T\left(\bigvee_i^{\mathfrak I_K} A_i\right)$ is an upper-bound of $\{\hat T(A_i):i\in I\}$. Therefore, $\hat T\left(\bigvee_i^{\mathfrak I_K} A_i\right)\supseteq \bigvee_i^{\mathfrak I_ L}\hat T(A_i)$.  

For the reverse inclusion, we make use of Lemma \ref{l1},
\begin{align*}
\hat T \left(\bigvee_{i\in I}{\!}^{{\mathfrak I_{K}}} A_i\right)=\left(T\biggl(\,\biggl(\bigcup_{i\in I} A_i\biggr)^{\sigma_{K}}\,\biggr)\right)^{\sigma_ L}&\subseteq\left(\bigcup_{i\in I} T(A_i)\right)^{\sigma_ L}\\
&\subseteq \left(\bigcup_{i\in I} \bigl(T(A_i)\bigr)^{\sigma_ L}\right)^{\sigma_ L}\\
&=\bigvee_{i\in I}{\!}^{\mathfrak I_ L}\,\,\hat T(A_i).
\end{align*}
\end{proof}

\subsection{Comparison with the Dedekind--MacNeille completion}

For a subset \(A\) of a lattice \(L\), let \(A^+\) and \(A^-\) denote the sets
of upper and lower bounds of \(A\), respectively. Recall that a subset
\(C\subseteq L\) is called a \emph{lower cut} if \(C=C^{+-}\).
The collection of all lower cuts of \(L\), ordered by inclusion, forms the
Dedekind--MacNeille completion of \(L\), which we denote by \(\DM(L)\).  The
canonical embedding is given by
\[
x\longmapsto \downarrow x.
\]

The following theorem is classical and will be used repeatedly throughout the
paper.

\begin{theorem}[Dedekind--MacNeille completion]
Let \(L\) be a lattice. Then \(\DM(L)\) is a complete lattice. Moreover,
the map 
\[
\varphi:L\to\DM(L)\,:\,x\longmapsto\, \downarrow x
\]
is a regular lattice-embedding, and the image of \(L\) is
both join-dense and meet-dense in \(\DM(L)\). In particular, 
   \[ a =\bigvee{}^{\!{ \DM(L)}}\,\set{\varphi(x)}{x\in L,\,\varphi(x)\le  a}\,,\]
   and 
   \[ a =\, \bigwedge{}^{\! \DM(L)}\set{\varphi(x)}{x\in L,\,\varphi(x)\ge  a}\,,\]
   for every $ a\in \DM(L)$.
\end{theorem}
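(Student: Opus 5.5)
The plan is to verify the classical facts directly from the cut construction, using only that \(C\mapsto C^{+-}\) is a closure operator on subsets of \(L\). Since \(\DM(L)\) is the family of closed sets of this operator, it is closed under arbitrary intersections. Hence it is a complete lattice under inclusion, with
\[
\bigwedge_i C_i=\bigcap_i C_i
\qquad\text{and}\qquad
\bigvee_i C_i=\Bigl(\bigcup_i C_i\Bigr)^{+-}.
\]
This follows by the same appeal to \cite[Theorem 2, p. 112]{Birkhoff} used above for \(\mathfrak I_L\). Next I check that \(\downarrow x\) is a lower cut: \((\downarrow x)^+=\uparrow x\) and \((\uparrow x)^-=\downarrow x\). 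The map \(\varphi\) is injective and order-reflecting because \(\downarrow x\subseteq\downarrow y\) if and only if \(x\le y\).

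For density, fix \(a\in\DM(L)\). For every \(x\in a\) we have \(\varphi(x)\subseteq a\), and \(a=\bigcup_{x\in a}\downarrow x\) since \(a\) is a downset. Therefore
\[
\bigvee_{x\in a}\varphi(x)=\Bigl(\bigcup_{x\in a}\downarrow x\Bigr)^{+-}=a^{+-}=a .
\]
Dually, \(\varphi(y)\supseteq a\) holds exactly when \(y\in a^+\). Hence
\[
\bigcap_{y\in a^+}\downarrow y=(a^+)^-=a .
\]
This gives both displayed formulas.

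For regularity, which also covers the lattice-homomorphism property since finite joins and meets are special cases, suppose \(s=\bigvee^L S\) exists. I need
\[
(\bigcup_{x\in S}\downarrow x)^{+-}=\downarrow s .
\]
The upper bounds of \(\bigcup_{x\in S}\downarrow x\) are the upper bounds of \(S\), which is \(\uparrow s\), and \((\uparrow s)^-=\downarrow s\). If instead \(t=\bigwedge^L S\) exists, then
\[
\bigcap_{x\in S}\downarrow x=\set{y}{y\le x\ \forall x\in S}=\downarrow t .
\]
There is no real obstacle. The only care needed is to handle joins through the \(^{+-}\) closure rather than through unions, and to treat the empty family consistently: \(\emptyset^{+-}=L^-\) is the least cut, matching \(\bigvee^L\emptyset\) when \(L\) has a least element.
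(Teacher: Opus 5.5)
Your proposal is correct and follows the same route as the paper: $\DM(L)$ is complete because $C\mapsto C^{+-}$ is a closure operator, with $\bigvee_i C_i=\bigl(\bigcup_i C_i\bigr)^{+-}$ and $\bigwedge_i C_i=\bigcap_i C_i$, which are exactly the formulas the paper records. The paper leaves the density and regularity claims to the standard references, whereas you verify them directly and correctly, including the empty-family cases.
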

\begin{proof}
See, for example, \cite{MacNeille1937}, \cite[Chapter~X]{Birkhoff}, or
\cite[Section~7.4]{DaveyPriestley}. The completeness of \(\DM(L)\) follows
from the fact that if $\set{ D_i}{i\in I}\subseteq \DM (L)$  then 
 \[\bigvee_{i\in I}{\!}^{\DM (L)}D_i=\left(\bigcup_{i\in I} D_i\right)^{+-}\quad\text{and}
 \quad\bigwedge_{i\in I}{\!}^{\DM (L)}D_{i}=\bigcap_{i\in I} D_{i}\,.\]
The remaining assertions are
the standard density properties of the MacNeille embedding.
\end{proof}

In the setting of Riesz spaces, Dedekind completions satisfy several important extension theorems for suitably order-continuous or order-bounded operators (see, for example, \cite{LuxemburgZaanen,AliprantisBurkinshaw}). Our next result shows that, under the
 hypothesis that $\DM(Y)$ remains infinitely distributive, an `analogous' extension phenomenon 
 holds for lattice homomorphisms. The role of the Dedekind-MacNeille completion is played by 
 $\DM(Y)$, while the role of order continuity is encoded by the O-closure machinery developed above.

\begin{lemma}[Lemma 18, \cite{AbelaChetcutiPositivity}]\label{24}
Let $Y$ be a sublattice of a lattice $ L$.  If $A,B \subseteq Y$ such that $A^{+-} \subseteq B^{+-}$, then $A^{+_{Y} -_{Y} }\subseteq  B^{+_{Y} -_{Y}}$.
\end{lemma}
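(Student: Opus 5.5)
The plan is to unwind the cut operators and reduce the claim to a statement about upper bounds taken inside $Y$. Write $A^{+_Y}=A^+\cap Y$ for the upper bounds in $Y$, and for $U\subseteq Y$ write $U^{-_Y}=U^-\cap Y$ for the lower bounds of $U$ in $Y$. Since $x\mapsto\, \downarrow x$ behaves well and the operations $^{+}$ and $^{-}$ are antitone Galois maps, the cleanest route avoids completions and argues directly with elements.

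First I will show that it suffices to prove $B^{+_Y}\subseteq A^{+_Y}$. Indeed, if that holds, then taking lower bounds in $Y$ reverses the inclusion:
\[
A^{+_Y-_Y}=(A^{+_Y})^-\cap Y\subseteq (B^{+_Y})^-\cap Y=B^{+_Y-_Y}.
\]
So the whole argument comes down to one inclusion between sets of upper bounds in $Y$.

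To prove that inclusion, let $y\in B^{+_Y}$. Then $y\in Y$ and $y$ is an upper bound of $B$ in $L$, that is, $y\in B^+$. Every element of $B^{+-}$ lies below every element of $B^+$, in particular below $y$. Using the hypothesis, $A\subseteq A^{+-}\subseteq B^{+-}$. Hence every $a\in A$ satisfies $a\le y$. Therefore $y\in A^+\cap Y=A^{+_Y}$, as required.

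There is no real obstacle here. The only point needing care is that lower bounds of $A$ computed in $L$ restrict correctly to $Y$. That is handled by writing $^{-_Y}$ as intersection with $Y$, which is valid because the order on $Y$ is the one inherited from $L$. Notably, no regularity of $Y$ is needed, only that $Y$ is a subposet of $L$.
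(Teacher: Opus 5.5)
Your proof is correct. From $A\subseteq A^{+-}\subseteq B^{+-}$ you get $B^{+}\subseteq A^{+}$, hence $B^{+_Y}=B^{+}\cap Y\subseteq A^{+}\cap Y=A^{+_Y}$, and the antitone operator $U\mapsto U^{-}\cap Y$ then gives the claim. As you observe, only the order induced on $Y$ is used. The paper does not prove this lemma; it quotes it from the authors' earlier work, so there is no in-paper argument to compare against, but yours is the standard Galois-connection proof and is complete. One small wording slip: in your last paragraph, ``lower bounds of $A$'' should read ``lower bounds of $A^{+_Y}$'' (more generally, of subsets of $Y$).
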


\begin{theorem}[\cite{AbelaChetcutiPositivity}, Theorem 20]\label{28}
	Let $Y$ be a sublattice of a lattice $L$.  The map  
	\[i:\DM(Y)\to\DM(L): A\mapsto A^{+-} \]
	is an order-embedding of $\DM(Y)$ into $\DM(L)$.  
\end{theorem}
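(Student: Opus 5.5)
We need to show three things about the map \(i\): it is well defined, it preserves order, and it reflects order. We use Lemma~\ref{24} for the last of these.

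\emph{Well defined.} Let \(A\in\DM(Y)\), so \(A\subseteq Y\) and \(A=A^{+_Y-_Y}\). The set \(A^{+-}\) (upper and lower bounds computed in \(L\)) is always a lower cut of \(L\), because \((A^{+-})^{+-}=A^{+-}\) holds for the Galois connection \(S\mapsto S^+\), \(S\mapsto S^-\) on subsets of \(L\). Hence \(i(A)\in\DM(L)\).

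\emph{Order preserving.} Suppose \(A\subseteq B\) in \(\DM(Y)\). Then \(B^+\subseteq A^+\), and therefore \(A^{+-}\subseteq B^{+-}\). This is just antitonicity of the two polar operations, applied twice.

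\emph{Order reflecting.} Suppose \(A,B\in\DM(Y)\) with \(i(A)\subseteq i(B)\), that is, \(A^{+-}\subseteq B^{+-}\). Since \(A,B\subseteq Y\) and \(Y\) is a sublattice of \(L\), Lemma~\ref{24} gives \(A^{+_Y-_Y}\subseteq B^{+_Y-_Y}\). Because \(A\) and \(B\) are lower cuts of \(Y\), this reads \(A\subseteq B\). So \(i(A)\subseteq i(B)\) if and only if \(A\subseteq B\). In particular \(i\) is injective, and hence an order-embedding.

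Where Lemma~\ref{24} is unavailable, the reflection step is the real content. I would prove it directly as follows. Take \(y\in A\) and an upper bound \(u\in B^{+_Y}\). Since \(B^{+_Y}\subseteq B^+\), every element of \(A^{+-}\subseteq B^{+-}\) lies below \(u\). In particular \(y\le u\), because \(y\in A\subseteq A^{+-}\). Hence \(y\in B^{+_Y-_Y}=B\). This argument uses only the inclusion \(B^{+_Y}\subseteq B^+\), so the obstacle is mild.
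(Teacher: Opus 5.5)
Your proof is correct and follows the intended route. The paper only cites this result, but it places Lemma~\ref{24} immediately before it precisely for the order-reflection step, while well-definedness and monotonicity come from the Galois connection $S\mapsto S^{+}$, $S\mapsto S^{-}$ exactly as you argue. Your self-contained check of reflection via $B^{+_Y}\subseteq B^{+}$ is also sound, and it shows that this step does not actually use the sublattice hypothesis.
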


\begin{proposition}\label{macneille}
    Let $ L$ be an infinitely distributive lattice. If $\DM(L)$ satisfies the Join-Infinite Distributive law, then $A^{+-}=A^{\sigma_ L}$ for every sublattice $A$ of $ L$.  In particular,     
    $\DM(L)=\mathfrak I_ L$.
\end{proposition}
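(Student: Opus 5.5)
We prove the two inclusions \(A^{\sigma_L}\subseteq A^{+-}\) and \(A^{+-}\subseteq A^{\sigma_L}\) separately for a sublattice \(A\subseteq L\). Then we deduce \(\DM(L)=\mathfrak I_L\) by applying the equality to lower cuts and to \(O\)-closed ideals, both of which are sublattices.

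\emph{Step 1: \(A^{\sigma_L}\subseteq A^{+-}\).} Let \(x\in A^{\sigma_L}\). By Proposition~\ref{3.4}(ii), \(x=\bigvee^L_{a\in A}x\wedge a\). If \(u\in A^+\), then \(u\ge a\ge x\wedge a\) for every \(a\in A\), so \(u\ge x\). Hence \(x\in A^{+-}\). This step uses only that \(L\) is a lattice.

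\emph{Step 2: \(A^{+-}\subseteq A^{\sigma_L}\).} This is the step where the hypothesis on \(\DM(L)\) enters. Let \(x\in A^{+-}\) and work in \(\DM(L)\) with the regular embedding \(\varphi\). First we show \(\varphi(x)\le\bigvee^{\DM(L)}_{a\in A}\varphi(a)\). The right-hand side is the cut \((\bigcup_a\downarrow a)^{+-}=A^{+-}\), which contains \(x\). JID in \(\DM(L)\) then gives
\[
\varphi(x)=\varphi(x)\wedge\bigvee_{a\in A}{}^{\!\DM(L)}\varphi(a)=\bigvee_{a\in A}{}^{\!\DM(L)}\varphi(x\wedge a).
\]
Since \(\varphi\) is a regular embedding, a supremum in \(\DM(L)\) of elements of \(\varphi(L)\) that lies in \(\varphi(L)\) is also the supremum in \(L\). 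Concretely, if \(c\in L\) bounds every \(x\wedge a\), then \(\varphi(c)\) bounds every \(\varphi(x\wedge a)\), so \(\varphi(c)\ge\varphi(x)\) and thus \(c\ge x\). Therefore \(x=\bigvee^L_{a\in A}x\wedge a\), and Proposition~\ref{3.4}(ii) gives \(x\in A^{\sigma_L}\). If \(A\) is empty, then \(A^{+-}=L^-\), which is either empty or \(\{\min L\}\); this case is checked directly using the convention in Proposition~\ref{3.4}.

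\emph{Step 3: \(\DM(L)=\mathfrak I_L\).} A lower cut \(C=C^{+-}\) is an ideal, hence a sublattice, so Steps 1 and 2 give \(C=C^{+-}=C^{\sigma_L}\), i.e.\ \(C\in\mathfrak I_L\). Conversely, an \(O\)-closed ideal \(I\) is a sublattice with \(I=I^{\sigma_L}\), and Steps 1 and 2 give \(I^{\sigma_L}=I^{+-}\). Thus \(I\) is a lower cut. Both families are ordered by inclusion, so they coincide as lattices.

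The main obstacle is the transfer in Step 2. JID holds only in \(\DM(L)\), while Proposition~\ref{3.4}(ii) needs a supremum computed in \(L\). The join-density and regularity of the MacNeille embedding bridge this gap, as in the upper-bound argument above. A smaller point is to get the identification \(\bigvee^{\DM(L)}_{a\in A}\varphi(a)=A^{+-}\) right and to handle the empty case separately.
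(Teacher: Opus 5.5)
Your proof is correct and follows the paper's argument. Step 2 (JID in $\DM(L)$ applied to $\varphi(x)\le\bigvee_{a\in A}\varphi(a)=A^{+-}$, then pulling the supremum back to $L$ and invoking Proposition~\ref{3.4}(ii)) and Step 3 are what the paper does, and your upper-bound argument spells out the transfer to $L$ that the paper states in one line. The only variation is the easy inclusion: the paper notes that the lower cut $A^{+-}$ is itself an $O$-closed ideal containing $A$, whereas you go through Proposition~\ref{3.4}(ii). That works, but your remark that this step ``uses only that $L$ is a lattice'' is inaccurate as written, since Proposition~\ref{3.4}(ii) itself relies on JID in $L$.
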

\begin{proof}
The cut \(A^{+-}\) is an \(O\)-closed ideal, and hence
\(A^{+-}\supseteq A^{\sigma_L}\). Let \(A\) be a sublattice of \(L\) and
\(x\in A^{+-}\). For \(y\in L\), write \(\widehat y=\downarrow y\) for its
canonical image in \(\DM(L)\), and put \(\mathbf b=A^{+-}\). Since
\(\widehat x\leq\mathbf b\), JID in \(\DM(L)\) and join-density give
\[
 \widehat x
 =\widehat x\wedge\mathbf b
 =\widehat x\wedge\bigvee_{a\in A}^{\DM(L)}\widehat a
 =\bigvee_{a\in A}^{\DM(L)}(\widehat x\wedge\widehat a).
\]
The join on the right has landed at the embedded element \(\widehat x\).
Consequently \(x\) is the least upper bound in \(L\) of
\(\{x\wedge a:a\in A\}\), so
\[
 x=\bigvee_{a\in A}^{L}(x\wedge a).
\]
Proposition~\ref{3.4} now gives
\(x\in(\mathscr H_L[A])_1^O=A^{\sigma_L}\). Thus
\(A^{+-}=A^{\sigma_L}\) for every sublattice \(A\).

Finally, every lower cut is an ideal, hence a sublattice, and therefore is
\(\sigma_L\)-closed by the equality just proved. Conversely, if
\(I\in\mathfrak I_L\), then \(I\) is an ideal and hence a sublattice, so
\(I^{+-}=I^{\sigma_L}=I\). Therefore \(\DM(L)=\mathfrak I_L\).
\end{proof}

The construction of Theorem \ref{tr4}  extends a lattice homomorphism
\(T:K\to L\)
to a canonical map between the lattices of \(O\)-closed ideals,
\(
\widehat T:\mathfrak I_K\to\mathfrak I_L.
\)
Thus the assignment \(L\mapsto\mathfrak I_L\) provides a completion-type framework in which lattice homomorphisms can allow for natural extensions.

When the Dedekind--MacNeille completion of a lattice remains infinitely distributive, Proposition \ref{macneille} identifies this \(O\)-closed ideal completion with the classical Dedekind--MacNeille completion. Consequently, the extension theory developed for \(\mathfrak I_L\) can be transferred back to \(\DM(L)\). The following theorem shows that, under this additional infinite distributivity assumption, lattice homomorphisms extend from a lattice to its Dedekind--MacNeille completion in a manner analogous to classical extension results for Dedekind complete Riesz spaces. 

There is, however, an important distinction. In the present setting, the extension obtained on \(\DM(Y)\) is automatically a meet-homomorphism, but preservation of finite joins  by the extension requires the original map \(T\) to preserve arbitrary joins already in \(Y\).  Thus, unlike the classical Riesz-space situation, the finite lattice structure alone seem  not to be sufficient to guarantee preservation of the corresponding lattice operations after extension, even when $\DM(Y)$ is assumed to be infinitely distributive.

\begin{theorem}\label{thmextension}  Let $Y$ be a lattice such that $\DM(Y)$ is infinitely distributive and let $T:Y\to L$ be a lattice homomorphism, where $L$ is an infinitely distributive complete lattice.  Define
\[\overline T:\DM(Y)\to L:x\mapsto\bigvee{}^{\! L}\,\{T(y):y\in Y,\,y\le x\}.\]
\begin{enumerate}[label={\rm(\roman*)}]
\item $\overline T$ is a meet-homomorphism extending $T$.
\item If $T$ preserves every join that exists in its domain, then so does $\overline T$. 
\end{enumerate}
\end{theorem}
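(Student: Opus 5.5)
We plan to factor $\overline T$ through the lattices of $O$-closed ideals. Since $\DM(Y)$ is infinitely distributive, $Y$ is infinitely distributive, because $Y$ sits regularly in $\DM(Y)$ and JID/MID pass to regular sublattices. Proposition~\ref{macneille} then identifies $\DM(Y)=\mathfrak I_Y$, with $x\in\DM(Y)$ corresponding to the lower cut $x=\{y\in Y:y\le x\}$. Theorem~\ref{tr4} gives the map $\hat T:\mathfrak I_Y\to\mathfrak I_L$, $A\mapsto (T(A))^{\sigma_L}$. This map is a meet-homomorphism, and it preserves existing joins when $T$ does.

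Next we need to pass from $\mathfrak I_L$ back to $L$. Define $\sup:\mathfrak I_L\to L$ by $I\mapsto\bigvee^L I$, which exists because $L$ is complete. We will show that $\overline T=\sup\circ\hat T$. The key observation is that $\bigvee^L (T(A))^{\sigma_L}=\bigvee^L T(A)$. Indeed, $(T(A))^{\sigma_L}$ lies inside the $O$-closed ideal $\downarrow\bigvee^L T(A)$, since principal downsets are $O$-closed: an order limit of elements below $c$ is the supremum of an increasing net below $c$ by Theorem~\ref{ThmIdealOclosure}(i). The reverse inequality holds because $T(A)\subseteq (T(A))^{\sigma_L}$. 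Hence $\overline T(x)=\sup\hat T(x)$.

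The extension property is then immediate: for $x=\downarrow y$ with $y\in Y$ we have $\overline T(x)=\bigvee T(\downarrow y)=T(y)$. Joins go through cleanly. By Theorem~\ref{tr4}(ii), $\hat T$ preserves arbitrary joins, and $\sup$ preserves arbitrary joins in $\mathfrak I_L$ because $\bigvee^L (\bigcup I_i)^{\sigma_L}=\bigvee^L\bigcup I_i$ by the same observation. This gives (ii).

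For (i), the map $\sup$ need not be a meet-homomorphism on all of $\mathfrak I_L$, so we will argue directly. Given $x,x'\in\DM(Y)$ with $A=x$ and $B=x'$ as cuts, we have $x\wedge x'=A\cap B$, and $T(A\cap B)=\{T(a)\wedge T(b)\}$ because $A$ and $B$ are ideals. Since $L$ is complete and satisfies JID, a two-step application of JID gives
\[
\Bigl(\bigvee_{a\in A} T(a)\Bigr)\wedge\Bigl(\bigvee_{b\in B}T(b)\Bigr)=\bigvee_{a\in A,\,b\in B}T(a)\wedge T(b)=\bigvee T(A\cap B).
\]
Thus $\overline T(x\wedge x')=\overline T(x)\wedge\overline T(x')$. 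This step does not really need Theorem~\ref{tr4}(i).

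The main point to verify carefully is that infinite distributivity of $\DM(Y)$ genuinely yields the hypotheses of Proposition~\ref{macneille} for $Y$. We also need to confirm that the empty-family and empty-cut conventions match: the bottom cut must be sent to $\bigvee^L\emptyset$, and the relevant joins are exactly the sets $\{T(y):y\le x\}$. In (ii) we should further check that "every join that exists in $\DM(Y)$" means all joins, which the $\mathfrak I$-framework handles once the identification $\DM(Y)=\mathfrak I_Y$ is made as complete lattices.
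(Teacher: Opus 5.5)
Your proof is correct. For (ii) it follows the paper: both identify $\DM(Y)=\mathfrak I_Y$ via Proposition~\ref{macneille} and factor $\overline T$ through the map $\hat T$ of Theorem~\ref{tr4}.

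The routes differ in how they return from $\mathfrak I_L$ to $L$. The paper also identifies $\mathfrak I_L$ with $L$, through the isomorphism $j\colon c\mapsto\downarrow c$ (Proposition~\ref{macneille} applied to $L$). It then reads off both (i) and (ii) at once from $\overline T=j^{-1}\circ\hat T$. You instead use only $\bigvee^L S^{\sigma_L}=\bigvee^L S$. That is enough to show that $\sup$ carries joins of $\mathfrak I_L$ to joins of $L$.

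For (i) you give a direct JID computation in $L$, and this is worth noting. It uses neither the hypothesis on $\DM(Y)$ nor Theorem~\ref{tr4}(i). So (i) holds for any lattice $Y$, and the completion hypothesis is really needed only for (ii). The paper's route is more uniform but hides this.

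Your aside that $\sup$ need not be a meet-homomorphism on $\mathfrak I_L$ is mistaken. Your own computation gives $\bigvee^L I\wedge\bigvee^L J=\bigvee^L(I\cap J)$ for all downsets $I,J$ of $L$. In fact $\sup=j^{-1}$: a nonempty ideal $I$ of the complete lattice $L$, viewed as a net indexed by itself, order converges to $\bigvee^L I$. Hence every $O$-closed ideal of $L$ is principal, and you could equally have obtained (i) from Theorem~\ref{tr4}(i).

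Your explicit check that $Y$ is infinitely distributive, because it is regular in $\DM(Y)$, is needed to apply Proposition~\ref{macneille} to $Y$. It fills a step the paper leaves tacit.
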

\begin{proof}
$T$ is isotone.  Hence $\overline T(y)=T(y)$ for every $y\in Y$.   Proposition \ref{macneille} implies $\DM(Y)=\mathfrak I_Y$ and $\DM(L)=\mathfrak I_ L$.  Therefore the map
\[j: L\to\mathfrak I_ L:x\mapsto\{a\in L:a\le x\}\,,\]
is a surjective order isomorphism. We show that $j\circ\overline T=\hat T$ where $\hat T:\mathfrak I_Y\to\mathfrak I_ L$ is the map induced by $T$ as in Theorem \ref{tr4}. Let $x\in \DM(Y)$.  Then $x=\{y\in Y:y\le x\}$ and 
\begin{align*}
j\circ \overline T\,(x)&=j\left(\bigvee{}^{\! L}\,\{T(y):y\in Y,\,y\le x\}\right)\\
&=\bigvee{}^{\!\mathfrak I_L}\,\{j(T(y)):y\in Y,y\le x\}\\
&=\{T(y):y\in Y,y\le x\}^{\sigma_ L}.
\end{align*}
  On the  other hand, 
  \begin{align*}
  \hat T\,(x)&=\hat T(\{y\in Y:y\le x\})\\
  &=\{T(y):y\in Y,y\le x\}^{\sigma_ L}.
  \end{align*}
Thus, $\overline T=j^{-1}\circ \hat T$.  Theorem \ref{tr4} implies that $\overline T$ is a meet-homomorphism and that if $T$ preserves arbitrary joins, then so does $\overline T$.  
\end{proof}

\begin{corollary}\label{cor:joinregular}
Let \(Y\) be a regular sublattice of an infinitely distributive complete lattice \(L\), and let \(\iota:Y\to L\) be the inclusion map. If \(\DM(Y)\) is infinitely distributive, then the canonical extension of \(\iota\) is a lattice embedding of \(\DM(Y)\) into \(L\), and its image is join-regular in \(L\).
\end{corollary}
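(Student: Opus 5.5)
We apply Theorem~\ref{thmextension} to the inclusion \(T=\iota:Y\to L\). Since \(Y\leq_{\reg}L\), every join that exists in \(Y\) is also a join in \(L\) with the same value. Hence \(\iota\) preserves all existing joins, and part~(ii) applies. The canonical extension is
\[
\overline\iota(x)=\bigvee{}^{\!L}\{y\in Y:y\le x\},\qquad x\in\DM(Y).
\]
By Theorem~\ref{thmextension}, \(\overline\iota\) is a meet-homomorphism extending \(\iota\) that preserves every join existing in \(\DM(Y)\). Because \(\DM(Y)\) is complete, \(\overline\iota\) preserves arbitrary joins, in particular finite ones. So \(\overline\iota\) is a lattice homomorphism preserving all joins.

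Next we show that \(\overline\iota\) is injective. Here we use the factorisation from the proof of Theorem~\ref{thmextension}: \(\overline\iota=j^{-1}\circ\hat\iota\), where \(\hat\iota(A)=A^{\sigma_L}\) for \(A\in\mathfrak I_Y=\DM(Y)\). By Proposition~\ref{pr2}(i), \(A^{\sigma_L}\cap Y=A^{\sigma_Y}=A\), since \(A\) is \(\sigma_Y\)-closed. So \(A\) can be recovered from \(\hat\iota(A)\), which makes \(\hat\iota\) injective. As \(j\) is a bijection, \(\overline\iota\) is injective.

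Concretely, if \(\overline\iota(A)=\overline\iota(B)\), then
\[
A=\{y\in Y: y\le\overline\iota(A)\}=\{y\in Y:y\le \overline\iota(B)\}=B.
\]
The first equality is the one that needs checking. If \(y\in Y\) and \(y\le\bigvee^L A\), then \(j(y)\subseteq A^{\sigma_L}\), so \(y\in A^{\sigma_L}\cap Y=A\). This step is the main point to verify: it rests on identifying \(j(\bigvee^L A)\) with \(A^{\sigma_L}\), which uses \(\DM(L)=\mathfrak I_L\) from Proposition~\ref{macneille}, applicable because \(L\) is a complete infinitely distributive lattice. Thus \(\overline\iota\) is a lattice embedding.

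Finally, we prove join-regularity of the image \(Z=\overline\iota(\DM(Y))\). Let \(S\subseteq Z\), say \(S=\overline\iota(\mathcal A)\) with \(\mathcal A\subseteq\DM(Y)\), and suppose \(S\) has a supremum in \(Z\). Completeness of \(\DM(Y)\) gives \(\bigvee^{\DM(Y)}\mathcal A\). Since \(\overline\iota\) preserves arbitrary joins, \(\overline\iota(\bigvee^{\DM(Y)}\mathcal A)=\bigvee^L S\), so \(\bigvee^L S\) lies in \(Z\). It is then also the supremum of \(S\) in \(Z\), and the two suprema agree. Hence \(Z\) is join-regular in \(L\).

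Meets are not claimed. Arbitrary meets need not be preserved, because Theorem~\ref{thmextension} provides only a finite meet-homomorphism.
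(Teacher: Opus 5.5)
Your proposal is correct and follows essentially the same route as the paper. You apply Theorem~\ref{thmextension}(ii) to the inclusion (join-preservation coming from regularity), identify $\overline\iota$ with $\hat\iota\colon A\mapsto A^{\sigma_L}$ via $j$, and obtain injectivity from $A^{\sigma_L}\cap Y=A^{\sigma_Y}=A$ in Proposition~\ref{pr2}(i). Your explicit verification of join-regularity (the image contains $\bigvee^L S$ because $\overline\iota$ preserves arbitrary joins of the complete lattice $\DM(Y)$) spells out what the paper only asserts in one line.
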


\begin{proof}
Let \(\iota:Y\to L\) denote the inclusion map. Since \(Y\) is regular in \(L\), \(\iota\)  is a lattice-homomorphism that preserves all existing joins in \(Y\). By Theorem~\ref{thmextension}, \(\iota\) extends to a join-preserving lattice homomorphism
\[
\overline\iota:\DM(Y)\to L.
\]

By Proposition~\ref{macneille}, we may identify \(\DM(Y)\) with \(\mathfrak I_Y\). Moreover, since \(L\) is complete and infinitely distributive, Proposition~\ref{macneille} also identifies \(L\) with \(\mathfrak I_L\) via
\[
x\longmapsto \downarrow x.
\]
Under these identifications, \(\overline\iota\) coincides with the map
\[
\widehat\iota:\mathfrak I_Y\to\mathfrak I_L,
\qquad
A\longmapsto A^{\sigma_L}.
\]
By Proposition~\ref{pr2}, for every \(A\subseteq Y\),
\[
A^{\sigma_Y}=A^{\sigma_L}\cap Y.
\]
Hence \(\widehat\iota\) is injective. Therefore \(\overline\iota\) lattice-embeds \(\DM(Y)\) into \(L\).  Moreover, this embedding is join-regular.
\end{proof}

\section{An obstruction to preservation under MacNeille completion}
\label{sec:counterexample}

The hypothesis on $\DM(Y)$ in the preceding results is substantive.  The
following example gives a concrete coordinate version, inside $\RR^3$, of the
classical obstruction behind the work of Dilworth--McLaughlin
\cite{DilworthMcLaughlin}.  It also complements later criteria for
distributivity of normal completions \cite{Erne1982,BallPultrNormalCompletion}.

\subsection{A regular sublattice of $\mathbb R^3$}
\label{subsec:concrete-dm-r3}

For $n,m\in\NN$, put
\[
\begin{aligned}
 a_{nm}&=\left(\frac1n,\frac1m,1\right),&
 b_{nm}&=\left(-\frac1n,\frac1m,1\right),\\
 c_{nm}&=\left(-\frac1n,\frac1m,-1\right),&
 d_{nm}&=\left(-\frac1n,-\frac1m,-1\right),
\end{aligned}
\]
and let $A,B,C,D$ denote the corresponding four families.  Set
\[
                 L=A\cup B\cup C\cup D\subseteq\RR^3.
\]
The missing limit planes $x=0$ and $y=0$ are responsible for the extra
MacNeille cuts appearing below.

\begin{figure}[H]
\centering
\begin{tikzpicture}

\begin{axis}[
 view={122}{20},
 width=.97\textwidth,
 height=.97\textwidth,
 axis lines=center,
 xlabel={$x$},ylabel={$y$},zlabel={$z$},
 xtick={-2,0,2},
 ytick={-2,0,2},
 ztick={-2,0,2},
 grid=major,
 grid style={gray!18},
 tick label style={font=\small},
 label style={font=\small},
 every axis plot/.append style={line width=.1pt},
 ]

\draw[line width=0.3 pt] (-1,0,-1)--(-1,0,1);
\addplot3[patch,patch type=rectangle,draw=Acol!40,fill=Acol!7,
  opacity=.65] coordinates {(0,0,1) (1,0,1) (1,1,1) (0,1,1)};
\addplot3[patch,patch type=rectangle,draw=Bcol!45,fill=Bcol!8,
  opacity=.65] coordinates {(-1,0,1) (0,0,1) (0,1,1) (-1,1,1)};
\addplot3[patch,patch type=rectangle,draw=Ccol!45,fill=Ccol!7,
  opacity=.58] coordinates {(-1,0,-1) (0,0,-1) (0,1,-1) (-1,1,-1)};
\addplot3[patch,patch type=rectangle,draw=Dcol!45,fill=Dcol!7,
  opacity=.58] coordinates {(-1,-1,-1) (0,-1,-1) (0,0,-1) (-1,0,-1)};
\draw[line width=0.3 pt] (0,1,-1)--(0,1,1);
\draw[line width=0.4 pt](0,0,0)--(0,0,-1);
\draw[line width=0.3 pt] (-1,1,-1)--(-1,1,1);
\draw[line width=0.5 pt,->,Acol] (1,1,1)--(1/10,1,1);
\draw[line width=0.5 pt,->,Acol] (1,1,1)--(1,1/10,1);
\draw[line width=0.5 pt,->,Bcol] (-1,1,1)--(-1/10,1,1);
\draw[line width=0.5 pt,->,Bcol] (-1,1,1)--(-1,1/10,1);
\draw[line width=0.5 pt,->,Ccol] (-1,1,-1)--(-1,1/10,-1);
\draw[line width=0.5 pt,->,Ccol] (-1,1,-1)--(-1/10,1,-1);
\draw[line width=0.5 pt,->,Dcol] (-1,-1,-1)--(-1,-1/10,-1);
\draw[line width=0.5 pt,->,Dcol] (-1,-1,-1)--(-1/10,-1,-1);

\foreach \n in {1,...,5}{
  \foreach \m in {1,...,5}{
    \addplot3[only marks,mark=*,mark size=1.25pt,Acol]
      coordinates {(1/\n,1/\m,1)};
    \addplot3[only marks,mark=triangle*,mark size=1.55pt,Bcol]
      coordinates {(-1/\n,1/\m,1)};
    \addplot3[only marks,mark=square*,mark size=1.25pt,Ccol]
      coordinates {(-1/\n,1/\m,-1)};
    \addplot3[only marks,mark=diamond*,mark size=1.45pt,Dcol]
      coordinates {(-1/\n,-1/\m,-1)};
  }
}

\foreach \k in {1,...,5}{
 \addplot3[Acol!48] coordinates
  {(1/\k,1,1) (1/\k,1/2,1) (1/\k,1/3,1) (1/\k,1/4,1)
   (1/\k,1/5,1) };
 \addplot3[Bcol!55] coordinates
  {(-1/\k,1,1) (-1/\k,1/2,1) (-1/\k,1/3,1) (-1/\k,1/4,1)
   (-1/\k,1/5,1) };
 \addplot3[Ccol!50] coordinates
  {(-1/\k,1,-1) (-1/\k,1/2,-1) (-1/\k,1/3,-1)
   (-1/\k,1/4,-1) (-1/\k,1/5,-1)};
 \addplot3[Dcol!50] coordinates
  {(-1/\k,-1,-1) (-1/\k,-1/2,-1) (-1/\k,-1/3,-1)
   (-1/\k,-1/4,-1) (-1/\k,-1/5,-1) };
}

\foreach \k in {1,...,5}{
 \addplot3[Acol!48] coordinates
  {(1,1/\k,1) (1/2,1/\k,1) (1/3,1/\k,1) (1/4,1/\k,1)
   (1/5,1/\k,1)};
 \addplot3[Bcol!55] coordinates
  {(-1,1/\k,1) (-1/2,1/\k,1) (-1/3,1/\k,1) (-1/4,1/\k,1)
   (-1/5,1/\k,1)};
 \addplot3[Ccol!50] coordinates
 {(-1,1/\k,-1) (-1/2,1/\k,-1) (-1/3,1/\k,-1) (-1/4,1/\k,-1)
   (-1/5,1/\k,-1)};
 \addplot3[Dcol!50] coordinates
  {(-1,-1/\k,-1) (-1/2,-1/\k,-1) (-1/3,-1/\k,-1)
   (-1/4,-1/\k,-1) (-1/5,-1/\k,-1)};
}
\addplot3[only marks,mark=*,mark size=1pt,Ccol]
 coordinates {(-1,1,-1)} node[left=0.5 pt] {${ c_{11}}$};
\addplot3[only marks,mark=*,mark size=0.5pt,Bcol]
 coordinates {(-1,1,1)} node[left=0.5 pt] {$b_{11}$};
\addplot3[only marks,mark=*,mark size=0.5pt,Acol]
 coordinates {(1,1,1)} node[right=0.5 pt] {$a_{11}$};
\addplot3[only marks,mark=*,mark size=0.5pt,Dcol]
 coordinates {(-1,-1,-1)} node[left=0.5 pt] {$d_{11}$};
\end{axis}
\end{tikzpicture}
\caption{The four grids defining $L\subseteq\RR^3$.  Only five points in
each index direction are shown; the pale rectangles are visual guides.}
\label{fig:DM-concrete}
\end{figure}

\begin{proposition}\label{prop:R3regular}
$L$ is a bounded regular sublattice of $\RR^3$, with least element $d_{11}$
and greatest element $a_{11}$.  In particular, $L$ is infinitely distributive.
\end{proposition}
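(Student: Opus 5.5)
The plan is to verify three things in turn: that \(L\) is a sublattice of \(\RR^3\), that it has the stated bounds, and that it is regular. Infinite distributivity then comes for free, because \(\RR^3\) is a vector lattice and hence infinitely distributive, and a regular sublattice inherits JID and MID. Indeed, if \(\bigvee^L S\) exists, regularity makes it equal to \(\bigvee^{\RR^3}S\). Then \(x\wedge\bigvee S=\bigvee^{\RR^3}(x\wedge s)\), and this element lies in \(L\), so it is also the supremum in \(L\). MID is dual.

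\emph{Sublattice and bounds.} Note the sign pattern of the four families:
\begin{itemize}
\item \(A\): \(x>0\), \(y>0\), \(z=1\);
\item \(B\): \(x<0\), \(y>0\), \(z=1\);
\item \(C\): \(x<0\), \(y>0\), \(z=-1\);
\item \(D\): \(x<0\), \(y<0\), \(z=-1\).
\end{itemize}
Each coordinate takes values in \(\{\pm 1/n\}\) or \(\{\pm1\}\). These sets are closed under binary \(\min\) and \(\max\) with preserved sign, since for example \(\min(-1/n,-1/n')=-1/\min(n,n')\). Hence within one family, meets and joins stay in the family. For two different families, the coordinatewise computation places the meet in the "lower" family and the join in the "higher" one, with the order \(D<C<B<A\) read from the sign pattern. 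For instance, \(a_{nm}\wedge b_{n'm'}=(-1/n',\min(1/m,1/m'),1)\in B\) and \(a_{nm}\wedge c_{n'm'}\in C\). I will run through the six unordered pairs of families; each is a one-line check. The bounds are immediate: every point satisfies \(d_{11}\le p\le a_{11}\), since \(\lvert x\rvert,\lvert y\rvert\le1\) and \(z\in\{\pm1\}\).

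\emph{Regularity (main step).} Let \(S\subseteq L\) have a supremum \(s\) in \(L\). Let \(u\) be its coordinatewise supremum in \(\RR^3\), which exists since \(S\) is bounded; then \(u\le s\). The key observation concerns suprema of subsets of \(\{1/n\}\) and of \(\{-1/n\}\). A supremum of a subset of \(\{1/n\}\) is attained. A supremum of a subset of \(\{-1/n\}\) is attained unless that subset is infinite, in which case it equals \(0\). Consequently, if no coordinate of \(u\) is \(0\), then \(u\) is the coordinatewise supremum of finitely many attained values, so \(u\in L\). Then \(u\) is an upper bound in \(L\) and \(s=u\).

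So suppose some coordinate of \(u\) is \(0\); I will show this contradicts the existence of \(s\).
\begin{itemize}
\item If \(u_x=0\), then \(S\subseteq B\cup C\cup D\) and the \(x\)-values of \(S\) are infinitely many \(-1/n\). Every upper bound of \(S\) in \(L\) has positive \(x\), so lies in \(A\). Conversely, \(a_{nm}\) is an upper bound for every \(n\) and every \(m\) with \(1/m\ge u_y\). A least upper bound would have to satisfy \(x\le 1/n\) for all \(n\), so \(x\le0\), which is impossible.
\item If \(u_y=0\), then \(S\subseteq D\) with infinitely many \(y\)-values. Every upper bound in \(L\) has \(y>0\). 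The points \(c_{nm}\) with \(-1/n\ge u_x\) and arbitrary \(m\) are all upper bounds, so a least one would need \(y\le 1/m\) for all \(m\), which is again impossible.
\end{itemize}
The \(z\)-coordinate is always \(\pm1\) and never causes trouble. Infima are handled by the dual argument: an infimum of an infinite subset of \(\{1/n\}\) is \(0\), and the lower bounds then run through the families \(B\) or \(C\) with first or second coordinate \(-1/n\) or \(1/m\) tending to \(0\).

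The only step needing real care is this case analysis in the regularity argument. Specifically, one must check that whenever a \(\RR^3\)-supremum falls on a missing plane \(x=0\) or \(y=0\), the upper bounds in \(L\) form a set with no least element. I expect this to be the main obstacle, and I would organise it by which coordinate of \(u\) vanishes, as above.
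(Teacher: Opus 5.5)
Your treatment of suprema is correct and is essentially the paper's argument. In both, the coordinatewise supremum $u$ is a finite join of elements of $S$, hence lies in $L$, unless it falls on one of the missing planes $x=0$ or $y=0$. On those planes a strictly decreasing sequence of upper bounds in $L$ rules out a least one. The only difference is bookkeeping. The paper splits according to the family containing $s=\sup_L S$, with a separate step excluding $s\in A$ with $S\cap A=\varnothing$. You split according to which coordinate of $u$ vanishes, which isolates the obstruction a little more directly. Your sign-pattern argument for closure under $\wedge,\vee$ and your derivation of JID/MID from regularity are also fine.

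One claim in your sketch of the infimum case is wrong as written. Suppose the infimum of the $y$-coordinates of $S$ is $0$, so that $S\subseteq A\cup B\cup C$ with infinitely many $y$-values. Then no element of $B$ or $C$ is a lower bound of $S$, since their $y$-coordinates are positive. The lower bounds lie in $D$; for instance $d_{1m}$ works, and its second coordinate $-1/m$ increases to $0$, which is what produces the contradiction. The cleanest repair is the paper's. The map $\theta(x,y,z)=(-y,-x,-z)$ is an order-reversing involution of $\RR^3$ that maps $L$ onto itself, interchanging $A$ with $D$ and $B$ with $C$. Hence regularity for infima follows at once from regularity for suprema, and ``dual'' becomes literally true with no further casework.
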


\begin{proof}
Closure under coordinatewise minima and maxima is immediate from the four
families.  For example,
\[
 d_{nm}\vee c_{rs}=c_{\max\{n,r\},s},\qquad
 c_{nm}\wedge a_{rs}=c_{n,\max\{m,s\}},
\]
and the other cases are analogous.  The bounds are $d_{11}$ and $a_{11}$.

We verify regularity for suprema; infima then follow from the order-reversing
bijection
\[
             \theta(x,y,z)=(-y,-x,-z),
\]
which maps $L$ onto itself.  Let $S\subseteq L$ have $s=\sup_L S$, and let
$t=\sup_{\RR^3}S$ coordinatewise.  Then $t\le s$.  If $s\in D$, the negative
coordinate suprema are attained, so $t\in D$.  If $s\in C$ (respectively,
$B$), the same is true unless one of the relevant coordinate suprema is $0$;
in that exceptional case there is a strictly decreasing sequence of upper
bounds in $C$ (respectively, in $B$), contradicting the existence of a least
upper bound.  Finally, if $s\in A$ but $S\cap A=\varnothing$, then the first
coordinate of $t$ is at most $0$ and, for all sufficiently large $r$,
$a_{r,m}<s$ is still an upper bound of $S$, again a contradiction.  Hence in
every case $t\in L$; minimality of $s$ gives $s=t$.

Thus every supremum and infimum existing in $L$ agrees with that in $\RR^3$.
Since $\RR^3$ is infinitely distributive, so is $L$.
\end{proof}

\subsection{The MacNeille cut and the obstruction}

For $E\subseteq L$, write $E^+$ and $E^-$ for its sets of upper and lower
bounds.  We identify $\DM(L)$ with the lower cuts $E=E^{+-}$; hence
\[
 E\wedge F=E\cap F,\qquad E\vee F=(E\cup F)^{+-}.
\]

\begin{proposition}\label{prop:Dcut}
$D$ is a MacNeille cut.
\end{proposition}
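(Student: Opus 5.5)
We must show $D^{+-}=D$. Since $D\subseteq D^{+-}$ always holds, the work is the inclusion $D^{+-}\subseteq D$. The plan is to compute $D^+$ explicitly, then show that every point of $A\cup B\cup C$ fails to lie below some element of $D^+$.

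\textbf{Step 1: computing $D^+$.} An element $p\in L$ is an upper bound of $D$ iff $p\ge d_{nm}=(-\tfrac1n,-\tfrac1m,-1)$ for all $n,m$. This holds iff $p$ has first coordinate $\ge 0$ and second coordinate $\ge 0$, by letting $n,m\to\infty$ coordinatewise; the third coordinate is always $\ge -1$.
\begin{itemize}
\item Every point of $A$ has positive first and second coordinates.
\item Points of $B$ and $C$ have first coordinate $-\tfrac1n<0$.
\item Points of $D$ have negative coordinates.
\end{itemize}
Hence $D^+=A$. The only delicate point is that no element of $L$ has a zero coordinate in the first two slots, so the limiting inequalities are strict and exclude $B$, $C$ and $D$.

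\textbf{Step 2: computing $A^-$.} An element $q\in L$ lies in $A^-$ iff $q\le a_{rs}=(\tfrac1r,\tfrac1s,1)$ for all $r,s$. Letting $r,s\to\infty$, this means the first coordinate of $q$ is $\le 0$ and the second coordinate is $\le 0$.
\begin{itemize}
\item Elements of $A$ have first coordinate $\tfrac1n>0$, so they are excluded.
\item Elements of $B$ and $C$ have second coordinate $\tfrac1m>0$, so they are excluded.
\item Elements of $D$ have both of these coordinates negative, so they qualify.
\end{itemize}
Therefore $D^{+-}=A^-=D$.

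\textbf{Main point of care.} I expect no serious obstacle. The only thing to be careful about is the sign bookkeeping in each family, in particular that $C$ is excluded from $A^-$ through its positive second coordinate rather than its first. The proof amounts to these two short coordinatewise computations, with the observation that membership in $D^+$ and $A^-$ is governed only by the limits $\tfrac1n,\tfrac1m\to 0$, which are never attained in $L$.
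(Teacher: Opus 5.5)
Your proof is correct and follows the paper's own argument: first $D^+=A$, because the first coordinates of $B\cup C\cup D$ are fixed negative numbers while $-\tfrac1n\uparrow 0$, and then $A^-=D$. Your only addition is the explicit coordinate check for $A^-$, excluding $B$ and $C$ through their positive second coordinate, which the paper asserts without detail.
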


\begin{proof}
Every member of $A$ dominates $D$, whereas no element of $B\cup C\cup D$
does, because its first coordinate is a fixed negative number while
$-1/n\uparrow0$.  Thus $D^+=A$.  The elements of $L$ below every member of
$A$ are exactly those in $D$, so $A^-=D$ and $D^{+-}=D$.
\end{proof}

Put
\[
             p=c_{11}=(-1,1,-1),\qquad
             q=b_{11}=(-1,1,1),
\]
so that $p<q$, and identify these elements with their principal cuts.  Directly,
\[
 D\cap\downarrow p=D\cap\downarrow q=\{d_{1m}:m\in\NN\},             \tag{1}
\]
and
\[
 (D\cup\downarrow p)^+=(D\cup\downarrow q)^+
      =\{a_{r1}:r\in\NN\}.                                           \tag{2}
\]
The common lower bounds of the family in (2) are precisely $B\cup C\cup D$;
therefore
\[
 D\vee\downarrow p=D\vee\downarrow q=B\cup C\cup D.                \tag{3}
\]
Consequently,
\[
 (D\vee\downarrow p)\wedge\downarrow q=\downarrow q,
 \qquad
 (D\wedge\downarrow q)\vee\downarrow p=\downarrow p.                \tag{4}
\]
Since $\downarrow p<\downarrow q$, (4) violates the modular identity.

\begin{theorem}[Concrete obstruction]\label{thm:R3obstruction}
There exists an infinitely distributive lattice $L$ which is a regular
sublattice of the Dedekind complete vector lattice $\RR^3$, while $\DM(L)$
is not modular.  In particular, $\DM(L)$ is not distributive and admits no
lattice embedding into $\RR^3$.
\end{theorem}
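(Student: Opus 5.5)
We take $L=A\cup B\cup C\cup D\subseteq\RR^3$ from Subsection~\ref{subsec:concrete-dm-r3} as the witness, and the proof is an assembly of the results just established. By Proposition~\ref{prop:R3regular}, $L$ is a bounded regular sublattice of $\RR^3$. Since $\RR^3$ is a vector lattice, it is infinitely distributive, and regularity transfers JID and MID to $L$: a supremum existing in $L$ is the coordinatewise supremum. So the first two assertions need no new work beyond recalling that $\RR^3$ is Dedekind complete.

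For non-modularity we work in $\DM(L)$, realized as lower cuts with $E\wedge F=E\cap F$ and $E\vee F=(E\cup F)^{+-}$. Proposition~\ref{prop:Dcut} shows that $D$ is a cut. We use $p=c_{11}$ and $q=b_{11}$, with $\downarrow p\subsetneq\downarrow q$. The computations (1)--(3) give $D\wedge\downarrow q=D\wedge\downarrow p$ and $D\vee\downarrow p=D\vee\downarrow q=B\cup C\cup D$.

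The step needing care is (2)--(3). The upper bounds of $D\cup\downarrow p$ must have third coordinate $1$ and first coordinate $\ge 0$, hence lie in $A$; second coordinate $\ge1$ then forces $a_{r1}$. Conversely, each $a_{r1}$ dominates $q$. The common lower bounds of $\{a_{r1}\}$ are exactly the points with first coordinate $\le0$, which is $B\cup C\cup D$.

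Then $(D\vee\downarrow p)\wedge\downarrow q=\downarrow q$, since $q\in B$. On the other side, $(D\wedge\downarrow q)\vee\downarrow p=\{d_{1m}\}\vee\downarrow p=\downarrow p$, because each $d_{1m}\le p$. Thus the modular law $(x\vee y)\wedge z=x\vee(y\wedge z)$ fails for $y\le z$ with $y=\downarrow p$, $z=\downarrow q$, $x=D$. Hence $\{D,\downarrow p,\downarrow q,B\cup C\cup D,\{d_{1m}\}\}$ is a pentagon $N_5$, so $\DM(L)$ is not modular and in particular not distributive.

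Finally, any lattice embedding of $\DM(L)$ into $\RR^3$ would make $\DM(L)$ isomorphic to a sublattice of a distributive lattice. Modularity is preserved under sublattices, which gives a contradiction.
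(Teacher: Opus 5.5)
Your proof is correct and follows the paper's route: the same witness $L$, Propositions~\ref{prop:R3regular} and~\ref{prop:Dcut}, the computations (1)--(4), and the fact that every sublattice of the distributive lattice $\RR^3$ is distributive. One small labelling slip: with $x=D$ and $y=\downarrow p\le z=\downarrow q$, the modular law reads $(x\vee y)\wedge z=y\vee(x\wedge z)$, not $x\vee(y\wedge z)$; the quantity $y\vee(x\wedge z)$ is the one you actually computed, so the argument is unaffected.
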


\begin{proof}
Propositions~\ref{prop:R3regular} and \ref{prop:Dcut}, together with
(1)--(4), give the first two assertions.  Every sublattice of $\RR^3$ is
distributive, so a non-distributive lattice cannot embed into it.
\end{proof}

The example isolates a useful mechanism: $D$ has the same intersection with
$\downarrow p$ and $\downarrow q$, while adjoining either $p$ or $q$ to $D$
produces the same set of upper bounds and hence the same MacNeille closure.
This suggests several problems.  They sit naturally beside the classical
criteria for distributivity of normal completions
\cite{DilworthMcLaughlin,Erne1982,BallPultrNormalCompletion}, Funayama-type
regular embedding results \cite{Crawley1962,BezhanishviliGabelaiaJibladze},
the general theory of MacNeille extensions \cite{TheunissenVenema}, and more
recent work on regular and related completions
\cite{HardingYang,BezhanishviliEtAl2025}.

\subsection{The two-dimensional case}

The preceding example is sharp with respect to the number of chain
coordinates.  The point is not regularity: in two coordinates the normal
completion of a bounded sublattice is automatically distributive.

\begin{theorem}[Two coordinates]\label{thm:R2-positive}
Let $C_1$ and $C_2$ be chains and let $Y$ be a bounded sublattice of
$C_1\times C_2$.  Then $\DM(Y)$ is distributive.  Consequently there is no
bounded sublattice $Y\subseteq\RR^2$---regular or otherwise---whose
Dedekind--MacNeille completion is non-modular.
\end{theorem}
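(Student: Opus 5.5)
The plan is to lattice-embed $\DM(Y)$ into the distributive lattice $\overline{C_1}\times\overline{C_2}$, where $\overline{C_i}=\DM(C_i)$ is the completion of the chain $C_i$ (again a chain, hence distributive). Any sublattice of a distributive lattice is distributive. The second sentence of the theorem then follows at once: non-modular implies non-distributive, and $\RR^2=\RR\times\RR$ is such a product.

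\emph{Step 1: cuts are determined by one vector.} For a lower cut $A=A^{+-}$ of $Y$ (polars taken in $Y$), define $u(A)\in\overline{C_1}\times\overline{C_2}$ as the coordinatewise supremum of $A$. Boundedness of $Y$ makes $A$ and $A^+$ nonempty, so all these suprema and infima are taken over nonempty sets. Every $v\in A^+$ dominates $u(A)$ coordinatewise. Hence any $y\in Y$ with $y\le u(A)$ lies below every $v\in A^+$, so $y\in A^{+-}=A$. Conversely $A\subseteq{\downarrow}u(A)$. Thus
\[
A=\set{y\in Y}{y\le u(A)},
\]
and $A\mapsto u(A)$ is an order embedding of $\DM(Y)$ into $\overline{C_1}\times\overline{C_2}$. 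This step is dimension-free, so by itself it cannot suffice: the cut $D$ of Section~\ref{sec:counterexample} shows that in $\RR^3$ the lattice operations are not coordinatewise.

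\emph{Step 2 (main obstacle): no gap between a cut and its upper bounds.} Let $s(A)$ be the coordinatewise infimum of $A^+$, so $u(A)\le s(A)$. The key two-coordinate lemma I would aim for is $u(A)=s(A)$. This is exactly what fails in the three-dimensional example: for the cut $D$ one has $u(D)=(0,0,-1)$ but $s(D)=(0,0,1)$.

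To prove it, suppose for contradiction that $u_1<s_1$, where $u_i$ and $s_i$ denote the $i$-th coordinates of $u(A)$ and $s(A)$. Pick $v\in A^+$, and let $\top$ be the greatest element of $Y$. I would try to build an element of $Y$ lying in $A^+$ whose first coordinate is at most $u_1$. The natural candidates are meets $v\wedge w$ with $w\in Y$ and $w_1\le u_1$ (for instance $w$ built from elements of $A$ by joins). With only two coordinates, $v\wedge w\in A^+$ reduces to the single condition $w_2\ge u_2$, since the first coordinate is then forced below $u_1$. So one must produce, from elements of $A$ and from $\top$, an element with small first coordinate and large second coordinate. Failing that, one should exhibit some $y\in Y\setminus A$ with $y\le u(A)$, contradicting Step 1.

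I expect the case analysis here to be delicate. The crux is whether the coordinatewise suprema $u_1$ and $u_2$ are attained, or only approached along $A$, and the two coordinates must be played off against each other. This is precisely where the absence of a third coordinate has to be used. The dual statement for upper cuts follows from the same argument applied to the order dual, which is again a bounded sublattice of a product of two chains.

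\emph{Step 3: the embedding is a lattice homomorphism.} For joins, $(A\cup B)^{+}=A^+\cap B^+$, so
\[
u(A\vee B)=s\bigl((A\cup B)^{+-}\bigr),
\]
the coordinatewise infimum of the common upper bounds. By Step 2, applied to $A$ and $B$ separately and using that the coordinates live in chains, this should equal $u(A)\vee u(B)$. For meets, $A\wedge B=A\cap B=Y\cap{\downarrow}\bigl(u(A)\wedge u(B)\bigr)$ by Step 1. Step 2 is then needed again, in its dual form, to see that the supremum vector of $A\cap B$ is exactly $u(A)\wedge u(B)$ and not something strictly smaller.

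Hence $u$ is a lattice embedding of $\DM(Y)$ into $\overline{C_1}\times\overline{C_2}$, and $\DM(Y)$ is distributive.
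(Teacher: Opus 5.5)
Your Step 1 is correct. The meet half of Step 3 also holds without Step 2: for $a\in A$, $b\in B$ one has $a\wedge b\in A\cap B$, and complete chains are infinitely distributive, so $u(A\cap B)=u(A)\wedge u(B)$. The gap is in Step 2 and in the join half of Step 3, and it is more than a missing argument: both claims are false in two coordinates.

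Take $Y\subseteq\RR^2$ consisting of the points $P_{pq}=(1-\tfrac1p,-\tfrac1q)$, $Q_{pk}=(1-\tfrac1p,\tfrac1k)$ and $t_k=(\tfrac32,\tfrac1k)$, for $p,q,k\in\NN$. This is a bounded sublattice, with bottom $(0,-1)$ and top $t_1$.
\begin{itemize}
\item The set $B=\{P_{pq}\}$ has $B^+=\{t_k\}$. The common lower bounds of $B^+$ are exactly the points with nonpositive second coordinate, so $B$ is a cut.
\item For this cut, $u(B)=(1,0)$ but $s(B)=(\tfrac32,0)$, which refutes Step 2.
\item Gaps of this kind come from $Y$ not being dense in the $C_i$, not from the dimension. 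They already occur for the bounded chain $\{-\tfrac1n:n\in\NN\}\cup\{1+\tfrac1n:n\in\NN\}\subseteq\RR$, where the cut $\{-\tfrac1n\}$ has $u=0<1=s$. So $u=s$ is not what separates $\RR^2$ from $\RR^3$.
\item With $A={\downarrow}(0,1)$ one gets $A^+\cap B^+=\{t_1\}$. Hence $A\vee B=Y$, and $u(A\vee B)=(\tfrac32,1)\neq(1,1)=u(A)\vee u(B)$.
\end{itemize}
So the conclusion of Step 3 fails for the map $u$ itself, not merely its proof: coordinatewise suprema do not give a lattice embedding of $\DM(Y)$ into $\overline{C_1}\times\overline{C_2}$. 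Applied to $-Y$, the same example shows that $A\mapsto s(A)$ does not preserve binary meets, so the obvious dual map fails as well.

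What is missing is a route to distributivity that does not require a coordinatewise representation of all cuts. The paper uses the exact-cut criterion of Ball and Pultr: $\DM(Y)$ is distributive provided no cut $A$ and no $c<d$ in $Y$ satisfy $A\subseteq c\mathbin{\downarrow}d$ and $A^+\subseteq c\mathbin{\uparrow}d$. Two coordinates enter only through a short case split.
\begin{itemize}
\item If both coordinates increase strictly from $c$ to $d$, then $a\wedge d\le c$ forces $a\le c$. So $c\in A^+$, which is impossible since $c\vee c\not\ge d$.
\item If only the first coordinate increases, then $a_1\le c_1$ on $A$ and $b_1\ge d_1$ on $A^+$. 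Since $c\in A^+$ and $d\in A$ are both excluded, there are $a\in A$ and $b\in A^+$ with $a_2>c_2>b_2$, contradicting $a\le b$.
\end{itemize}
Your final deduction for $\RR^2$ is fine once distributivity is established.
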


\begin{proof}
We use the exact-cut criterion of Ball and Pultr
\cite[Theorem~3.4]{BallPultrNormalCompletion}.  For $c<d$ in $Y$ put
\[
 c\mathbin{\downarrow}d=\{x\in Y:x\wedge d\le c\},\qquad
 c\mathbin{\uparrow}d=\{x\in Y:x\vee c\ge d\}.
\]
A cut $A\in \DM(Y)$ is said to be non-exact precisely when, for some $c<d$,
\[
             A\subseteq c\mathbin{\downarrow}d,
             \qquad
             A^+\subseteq c\mathbin{\uparrow}d.                 \tag{5}
\]
We show that this cannot occur.

Write $c=(c_1,c_2)$ and $d=(d_1,d_2)$.  If both coordinates increase
strictly, then $a\wedge d\le c$ forces $a\le c$ for every $a\in A$.
Hence $c$ is an upper bound of $A$, so $c\in A^+$.  This contradicts
(5), since $c\vee c=c<d$.

It remains to consider the case in which exactly one coordinate changes;
the other case is symmetric.  Suppose
\[
             c_1<d_1,\qquad c_2=d_2=s.
\]
Then (5) says that $a_1\le c_1$ for every $a\in A$ and $b_1\ge d_1$ for
every $b\in A^+$.  If $a_2\le s$ for all $a\in A$, then $c$ is an upper
bound of $A$, whence $c\in A^+$, again impossible.  Thus some $a\in A$
satisfies $a_2>s$.  Dually, if $b_2\ge s$ for all $b\in A^+$, then $d$ is a
lower bound of $A^+$, whence $d\in A$, also impossible.  Hence some
$b\in A^+$ satisfies $b_2<s$.  But $a\le b$, contradicting
\[
                       s<a_2\le b_2<s.
\]
Thus every cut is exact, and the Ball--Pultr criterion gives the
distributivity of $\DM(Y)$.
\end{proof}

\begin{corollary}[Sharpness of the $\RR^3$ example]\label{cor:min-dimension}
Among bounded sublattices of finite products of real chains, dimension three
is the least dimension in which a non-distributive Dedekind--MacNeille
completion can occur.  The lattice of Theorem~\ref{thm:R3obstruction} shows
that this minimum is attained even by an infinitely distributive regular
sublattice.
\end{corollary}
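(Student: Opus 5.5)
The corollary combines the two preceding results, so the proof only has to assemble them.

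\emph{Dimensions one and two.} A finite product of real chains of dimension $k$ is a product $C_1\times\cdots\times C_k$ with each $C_i\subseteq\RR$ a chain. For $k=2$, a bounded sublattice $Y\subseteq C_1\times C_2$ is covered by Theorem~\ref{thm:R2-positive}, so $\DM(Y)$ is distributive. For $k=1$ I would reduce to that theorem: either embed $C_1$ into $C_1\times C_1$ diagonally via $t\mapsto(t,t)$, which is a lattice embedding preserving boundedness and the DM completion of the image, or observe directly that a bounded chain has a chain as its Dedekind--MacNeille completion, and chains are distributive. For $k=0$ the product is a single point, so there is nothing to prove. Hence no non-distributive completion occurs in dimension at most two.

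\emph{Dimension three.} Theorem~\ref{thm:R3obstruction} provides $L\subseteq\RR^3=\RR\times\RR\times\RR$. By Proposition~\ref{prop:R3regular}, $L$ is bounded (with least element $d_{11}$ and greatest element $a_{11}$), infinitely distributive, and regular in $\RR^3$. By (1)--(4), $\DM(L)$ is not modular, hence not distributive. So a non-distributive completion occurs in dimension three, and it is realized by an infinitely distributive regular sublattice. Together with the previous paragraph, this shows that three is the least such dimension.

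\emph{Main obstacle.} The only step that needs care is the terminology: one must check that ``finite product of real chains'' in dimension $\le 2$ really falls under the hypotheses of Theorem~\ref{thm:R2-positive}, which allows arbitrary chains, so subchains of $\RR$ qualify. One must also settle the degenerate case $k=1$ by the diagonal embedding or the chain argument above. Everything else is a direct citation.
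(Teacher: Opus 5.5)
Your proposal is correct and matches how the paper obtains the corollary: the paper gives no separate proof and reads it off directly from Theorem~\ref{thm:R2-positive} for dimension at most two, and from Theorem~\ref{thm:R3obstruction} together with Proposition~\ref{prop:R3regular} for dimension three. Your explicit treatment of dimension one, via the diagonal embedding or the observation that the completion of a chain is again a chain, fills in a step the paper leaves implicit.
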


\section{Open problems}

\begin{question}[Intrinsic preservation criterion]\label{q:criterion}
Let $Y$ be infinitely distributive.  Can one characterize, in terms intrinsic
to $Y$ (for example through $O$-closed ideals, the operator $\sigma_Y$, or
exactness of cuts), when $\DM(Y)$ is infinitely distributive?  
\end{question}

\begin{question}[Regular realization]\label{q:regular-realization}
Let $Y\leq_{\rm reg} L$, where $L$ is complete and infinitely distributive,
and suppose $\DM(Y)$ is infinitely distributive.  Corollary~\ref{cor:joinregular} gives a
join-regular lattice embedding of $\DM(Y)$ into $L$.  Under what additional
hypotheses is this embedding regular, i.e. also preserves arbitrary meets?
Can the extra hypotheses be formulated solely in terms of the inclusion
$Y\subseteq L$?
\end{question}

\begin{question}[Beyond the MacNeille hypothesis]\label{q:weaker-hypothesis}
In Theorem~3.14, can the assumption that $\DM(Y)$ is infinitely distributive
be weakened?  In particular, which properties of the $O$-closed ideal
completion $\mathfrak I_Y$ are actually necessary for a lattice
homomorphism $T:Y\to L$ to admit a canonical lattice-homomorphic extension?
A satisfactory answer could clarify the relation between the completion
$\mathfrak I_Y$ and other distributive completions, such as the ideal,
Bruns--Lakser and canonical completions.
\end{question}

\begin{question}[Finite-dimensional coordinate criterion]\label{q:coordinate-criterion}
For $n\ge3$, can one characterize the bounded regular sublattices
$Y\subseteq\RR^n$ for which $\DM(Y)$ is distributive (or infinitely
distributive) in terms of the geometry of the coordinate projections?
In particular, is there a finite-dimensional version of the exact-cut
criterion that detects the obstruction directly from the coordinate data?
\end{question}


\end{document}